\documentclass[11pt]{article} 

\usepackage{amssymb}
\usepackage{amsmath,amsthm,amsfonts,epsfig,setspace}

\usepackage[utf8]{inputenc}
\usepackage{cases}

\usepackage{graphicx} 
\graphicspath{ {MATLAB/} }
\usepackage{subcaption}
\usepackage{color}
\usepackage{hyperref}
\usepackage{tikz}
\usepackage{wrapfig}
\usetikzlibrary{intersections,decorations.pathreplacing,decorations.markings,calc,positioning}

\numberwithin{equation}{section}

\pagestyle{plain} 
\bibliographystyle{abbrv}

\usepackage[a4paper,bottom=4.5cm]{geometry}

\newcommand{\nm}{\noalign{\smallskip}}
\newcommand{\beq}{\begin{equation}}
\newcommand{\eeq}{\end{equation}}
\newcommand{\eqnref}[1]{(\ref {#1})}
\newcommand{\ds}{\displaystyle}

\newcommand{\p}{\partial}
\newcommand{\pd}[2]{\frac {\p #1}{\p #2}}
\newcommand{\bigslant}[2]{{\raisebox{.2em}{$#1$}\left/\raisebox{-.2em}{$#2$}\right.}}
\newcommand{\ie}{\textit{i.e.}}

\newtheorem{theorem}{Theorem}[section]

\newtheorem{lemma}[theorem]{Lemma}

\newtheorem{remark}{Remark}
\newtheorem{asump}{Assumption}[section]

\captionsetup[subfigure]{subrefformat=simple,labelformat=simple}
\captionsetup{font={it}}

\newcommand{\R}{\mathbb{R}}

\newcommand{\Bx}{{x}}
\newcommand{\By}{{y}}

\newcommand{\Scal}{\mathcal{S}}
\newcommand{\Kcal}{\mathcal{K}}
\newcommand{\Acal}{\mathcal{A}}
\newcommand{\dx}{\: \mathrm{d}}
\renewcommand{\O}{\mathcal{O}}
\newcommand{\iu}{\mathrm{i}\mkern1mu}

\title{A high-frequency homogenization approach near the Dirac points in bubbly honeycomb crystals}

\author{
Habib Ammari\thanks{\footnotesize Department of Mathematics, 
ETH Z\"urich, 
R\"amistrasse 101, CH-8092 Z\"urich, Switzerland (habib.ammari@math.ethz.ch, erik.orvehed.hiltunen@sam.math.ethz.ch, sanghyeon.yu@sam.math.ethz.ch).}\and Erik Orvehed Hiltunen\footnotemark[1]  \and Sanghyeon Yu\footnotemark[1]}

\date{} 
  
\begin{document}
\maketitle
\begin{abstract}
In [H. Ammari et al., Honeycomb-lattice Minnaert bubbles. arXiv:1811.03905], the existence of a Dirac dispersion cone in a bubbly honeycomb phononic crystal is shown. The aim of this paper is to prove that, near the Dirac points, the Bloch eigenfunctions is the sum of two eigenmodes. Each eigenmode can be decomposed into two components: one which is slowly varying and satisfies a homogenized equation, while the other is periodic across each elementary crystal cell and is highly oscillating. The slowly oscillating components of the eigenmodes satisfy a system of Dirac equations. 
Our results in this paper proves for the first time a near-zero effective refractive index near the Dirac points for the plane-wave envelopes of the Bloch eigenfunctions in a sub-wavelength metamaterial. They are illustrated by a variety of numerical examples. 
We also compare and contrast the behaviour of the Bloch eigenfunctions in the honeycomb crystal with that of their counterparts in a bubbly square crystal, near the corner of the Brillouin zone, where the maximum of the first Bloch eigenvalue is attained.
\end{abstract}

\def\keywords2{\vspace{.5em}{\textbf{  Mathematics Subject Classification
(MSC2000).}~\,\relax}}
\def\endkeywords2{\par}
\keywords2{35R30, 35C20.}

\def\keywords{\vspace{.5em}{\textbf{ Keywords.}~\,\relax}}
\def\endkeywords{\par}
\keywords{Honeycomb lattice, Dirac cone, Dirac equation, bubble, Minnaert resonance, sub-wavelength bandgap, near-zero effective index.}

\section{Introduction}
Metamaterials are a novel group of materials designed to have special wave characteristics such as bandgaps, negative refractive indices, or sub-wavelength scale resolution in imaging. There have also been demonstrations of materials
with near-zero refractive indices. These materials have a wide number of applications, including low-loss bending transmission, invisibility cloaking, and zero phase-shift propagation \cite{alu, Dubois2017, Hyun2018, NZ2012,Z2013}.

The first near-zero refractive index phononic crystal was theoretically demonstrated in  \cite{NZ2012}, where the effective mass density and reciprocal bulk modulus were shown to vanish simultaneously.  This near-zero effective refractive index is a consequence of the existence of a Dirac dispersion cone in the dispersion relation of the material. The double-zero property is possible because the Dirac cone is located at the centre $\Gamma$ of the Brillouin zone. Single-zero properties have been studied for other locations of the Dirac cone; however, these materials exhibit a low transmittance making them less desirable for applications \cite{Dubois2017,add2,add3}. At high frequencies, the Bloch eigenmodes will oscillate on the microscale of the metamaterial, suggesting that a homogeneous description of the material is overly simplified. Nevertheless, as will be shown in this paper, an effective equation for the envelopes of these Bloch eigenmodes can be derived. 

Metamaterials with Dirac singularities have been experimentally and numerically studied in \cite{torrent,Dirac1,Dirac2}. Proofs of the existence  of a Dirac cone
at  the symmetry point $K$ in honeycomb lattice structures and mathematical analyses of their properties are provided in \cite{honeycomb, jmp, drouot, Dirac6, Dirac2018, tightbind2,tightbind1}. In \cite{Dubois2017}, time-dependent material parameters are used to move the Dirac cone from the point $K$ to the centre $\Gamma$ of the Brillouin zone, enabling a double-zero refractive index.

Sub-wavelength resonators are the building blocks of metamaterials. In acoustics, a gas bubble in a liquid is known to have a resonance frequency corresponding to wavelengths which are several orders of magnitude larger than
the bubble \cite{H3a, Minnaert1933}. This opens up the possibility of creating small-scaled acoustic metamaterials known as \emph{sub-wavelength} metamaterials, whereby the operating frequency corresponds to wavelengths much larger than the device size. The simplicity of the gas bubble makes bubbly media an ideal model for sub-wavelength metamaterials. Many experimentally observed phenomena in bubbly media  \cite{rev1,leroy2, fink,  leroy1, leroy3, rev2, rev3} have been rigorously explained in \cite{Ammari_David,AFLYZ, defect, honeycomb, AZ_hom}. In particular, in \cite{honeycomb},  a bubbly honeycomb crystal is considered, and a Dirac dispersion cone centred at the symmetry point $K$ in the Brillouin zone is shown to exist.

In this paper, we prove the near-zero effective index property around the point $K$ in a bubbly honeycomb crystal at the deep sub-wavelength scale. We will develop a homogenization theory that captures both the macroscopic behaviour of the eigenmodes and the oscillations in the microscopic scale, and demonstrate that the near-zero property holds in the macroscopic scale.

In the homogenization theory of metamaterials, the goal is to map the metamaterial to a homogeneous material with some effective parameters. It has previously been demonstrated that this approach does not apply in the case of  bubbly crystals at ``high'' frequencies, \ie{} away from the centre $\Gamma$ of the Brillouin zone. In \cite{homogenization}, it is shown that around the symmetry point $M$ in the Brillouin zone of a bubbly crystal with a square lattice, the Bloch eigenmodes display oscillatory behaviour on two distinct scales: small scale
oscillations on the order of the size of individual
bubbles, while simultaneously the plane-wave envelope oscillates at a much larger scale and satisfies a homogenized equation. Analogously, we expect the standard homogenization approach to fail for the honeycomb crystal and seek instead a homogenized equation for the envelopes of the eigenmodes. We will demonstrate that this is a near-zero refractive index homogenized equation near the Dirac points. Moreover, we will compare our results with the case of a square lattice crystal, which does not have a linear dispersion relation around the symmetry points of the Brillouin zone, and consequently cannot have effective near-zero refractive index.

This paper is organized as follows. In Section \ref{sec:setup}, we present the eigenvalue problem of the bubbly honeycomb crystal, and state the main results of \cite{honeycomb}. In Section \ref{sec:efunc},  we use layer-potential techniques to compute the Bloch eigenfunctions close to the point $K$ in the asymptotic limit of high density contrast. In Section \ref{sec:hom}, we decompose the Bloch eigenfunctions as the sum of two eigenmodes, each with a slowly oscillating plane-wave envelope. We also derive a system of Dirac equations satisfied by the slowly oscillating components of the eigenmodes  in the vicinity of the Dirac points. This generalizes the result obtained in \cite{Dirac8}, for the Schr\"odinger equation, to wave propagation in sub-wavelength resonant structures. The main result is stated in Theorem \ref{thm:main}, where the Bloch eigenmodes are shown to exhibit the two-scale behaviour as described above. In Section \ref{sec:num}, we numerically illustrate Theorem \ref{thm:main}. 
We show that the macroscopic plane-wave envelope in the honeycomb crystal has a lower order of oscillations compared to the Bloch eigenfunction of the square crystal, and demonstrate the near-zero effective refractive index of the honeycomb crystal. Finally, in Section \ref{sec:conclusion}, we summarise the main results of this paper and briefly discuss the remaining challenges in the field.

\section{Problem statement and preliminaries} \label{sec:setup}
In this section, we describe the honeycomb lattice and state the main results of \cite{honeycomb}. 
\subsection{Problem formulation}
We consider a two-dimensional infinite honeycomb crystal in two dimensions depicted in Figure \ref{fig:honeycomb}. Define the lattice $\Lambda$ generated by the lattice vectors
$$ l_1 = L\left( \frac{\sqrt{3}}{2}, \frac{1}{2} \right),~~l_2 = L\left( \frac{\sqrt{3}}{2}, -\frac{1}{2}\right),$$
where $L$ is the lattice constant. Denote by $Y$ a fundamental domain of the given lattice. Here, we take 
$$ Y:= \left\{ s l_1+ t l_2 ~|~ 0 \le s,t \le 1 \right\}. $$
Define the three points $x_0, x_1,$ and $x_2$ as
$$x_0 = \frac{l_1 + l_2}{2}, \quad x_1 = \frac{l_1+l_2}{3}, \quad x_2 = \frac{2(l_1 + l_2)}{3} .$$

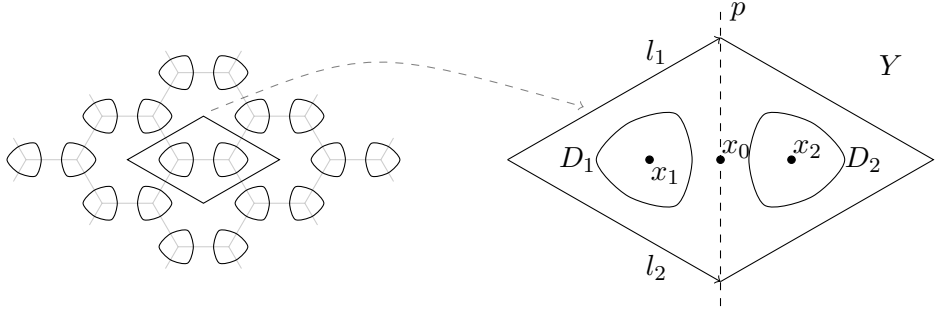
\begin{figure}[tb]
	\centering
	\begin{tikzpicture}
	\begin{scope}[xshift=-5cm,scale=1]
	\coordinate (a) at (1,{1/sqrt(3)});		
	\coordinate (b) at (1,{-1/sqrt(3)});	
	\pgfmathsetmacro{\rb}{0.25pt}
	\pgfmathsetmacro{\rs}{0.2pt}
	
	\draw (0,0) -- (1,{1/sqrt(3)}) -- (2,0) -- (1,{-1/sqrt(3)}) -- cycle; 
	\begin{scope}[xshift = 1.33333cm]
	\draw plot [smooth cycle] coordinates {(0:\rb) (60:\rs) (120:\rb) (180:\rs) (240:\rb) (300:\rs) };
	\end{scope}
	\begin{scope}[xshift = 0.666667cm, rotate=60]
	\draw plot [smooth cycle] coordinates {(0:\rb) (60:\rs) (120:\rb) (180:\rs) (240:\rb) (300:\rs) };
	\end{scope}
	
	\draw[opacity=0.2] ({2/3},0) -- ({4/3},0)
	($0.5*(1,{1/sqrt(3)})$) -- ({2/3},0)
	($0.5*(1,{-1/sqrt(3)})$) -- ({2/3},0)
	($(1,{1/sqrt(3)})+0.5*(1,{-1/sqrt(3)})$) -- ({4/3},0)
	($0.5*(1,{1/sqrt(3)})+(1,{-1/sqrt(3)})$) -- ({4/3},0);
	
	\begin{scope}[shift = (a)]
	\begin{scope}[xshift = 1.33333cm]
	\draw plot [smooth cycle] coordinates {(0:\rb) (60:\rs) (120:\rb) (180:\rs) (240:\rb) (300:\rs) };
	\end{scope}
	\begin{scope}[xshift = 0.666667cm, rotate=60]
	\draw plot [smooth cycle] coordinates {(0:\rb) (60:\rs) (120:\rb) (180:\rs) (240:\rb) (300:\rs) };
	\end{scope}	
	\draw[opacity=0.2] ({2/3},0) -- ({4/3},0)
	($0.5*(1,{1/sqrt(3)})$) -- ({2/3},0)
	($0.5*(1,{-1/sqrt(3)})$) -- ({2/3},0)
	($(1,{1/sqrt(3)})+0.5*(1,{-1/sqrt(3)})$) -- ({4/3},0)
	($0.5*(1,{1/sqrt(3)})+(1,{-1/sqrt(3)})$) -- ({4/3},0);
	\end{scope}
	\begin{scope}[shift = (b)]
	\begin{scope}[xshift = 1.33333cm]
	\draw plot [smooth cycle] coordinates {(0:\rb) (60:\rs) (120:\rb) (180:\rs) (240:\rb) (300:\rs) };
	\end{scope}
	\begin{scope}[xshift = 0.666667cm, rotate=60]
	\draw plot [smooth cycle] coordinates {(0:\rb) (60:\rs) (120:\rb) (180:\rs) (240:\rb) (300:\rs) };
	\end{scope}
	\draw[opacity=0.2] ({2/3},0) -- ({4/3},0)
	($0.5*(1,{1/sqrt(3)})$) -- ({2/3},0)
	($0.5*(1,{-1/sqrt(3)})$) -- ({2/3},0)
	($(1,{1/sqrt(3)})+0.5*(1,{-1/sqrt(3)})$) -- ({4/3},0)
	($0.5*(1,{1/sqrt(3)})+(1,{-1/sqrt(3)})$) -- ({4/3},0);
	\end{scope}
	\begin{scope}[shift = ($-1*(a)$)]
	\begin{scope}[xshift = 1.33333cm]
	\draw plot [smooth cycle] coordinates {(0:\rb) (60:\rs) (120:\rb) (180:\rs) (240:\rb) (300:\rs) };
	\end{scope}
	\begin{scope}[xshift = 0.666667cm, rotate=60]
	\draw plot [smooth cycle] coordinates {(0:\rb) (60:\rs) (120:\rb) (180:\rs) (240:\rb) (300:\rs) };
	\end{scope}
	\draw[opacity=0.2] ({2/3},0) -- ({4/3},0)
	($0.5*(1,{1/sqrt(3)})$) -- ({2/3},0)
	($0.5*(1,{-1/sqrt(3)})$) -- ({2/3},0)
	($(1,{1/sqrt(3)})+0.5*(1,{-1/sqrt(3)})$) -- ({4/3},0)
	($0.5*(1,{1/sqrt(3)})+(1,{-1/sqrt(3)})$) -- ({4/3},0);
	\end{scope}
	\begin{scope}[shift = ($-1*(b)$)]
	\begin{scope}[xshift = 1.33333cm]
	\draw plot [smooth cycle] coordinates {(0:\rb) (60:\rs) (120:\rb) (180:\rs) (240:\rb) (300:\rs) };
	\end{scope}
	\begin{scope}[xshift = 0.666667cm, rotate=60]
	\draw plot [smooth cycle] coordinates {(0:\rb) (60:\rs) (120:\rb) (180:\rs) (240:\rb) (300:\rs) };
	\end{scope}
	\draw[opacity=0.2] ({2/3},0) -- ({4/3},0)
	($0.5*(1,{1/sqrt(3)})$) -- ({2/3},0)
	($0.5*(1,{-1/sqrt(3)})$) -- ({2/3},0)
	($(1,{1/sqrt(3)})+0.5*(1,{-1/sqrt(3)})$) -- ({4/3},0)
	($0.5*(1,{1/sqrt(3)})+(1,{-1/sqrt(3)})$) -- ({4/3},0);
	\end{scope}
	\begin{scope}[shift = ($(a)+(b)$)]
	\begin{scope}[xshift = 1.33333cm]
	\draw plot [smooth cycle] coordinates {(0:\rb) (60:\rs) (120:\rb) (180:\rs) (240:\rb) (300:\rs) };
	\end{scope}
	\begin{scope}[xshift = 0.666667cm, rotate=60]
	\draw plot [smooth cycle] coordinates {(0:\rb) (60:\rs) (120:\rb) (180:\rs) (240:\rb) (300:\rs) };
	\end{scope}
	\draw[opacity=0.2] ({2/3},0) -- ({4/3},0)
	($0.5*(1,{1/sqrt(3)})$) -- ({2/3},0)
	($0.5*(1,{-1/sqrt(3)})$) -- ({2/3},0)
	($(1,{1/sqrt(3)})+0.5*(1,{-1/sqrt(3)})$) -- ({4/3},0)
	($0.5*(1,{1/sqrt(3)})+(1,{-1/sqrt(3)})$) -- ({4/3},0);
	\end{scope}
	\begin{scope}[shift = ($-1*(a)-(b)$)]
	\begin{scope}[xshift = 1.33333cm]
	\draw plot [smooth cycle] coordinates {(0:\rb) (60:\rs) (120:\rb) (180:\rs) (240:\rb) (300:\rs) };
	\end{scope}
	\begin{scope}[xshift = 0.666667cm, rotate=60]
	\draw plot [smooth cycle] coordinates {(0:\rb) (60:\rs) (120:\rb) (180:\rs) (240:\rb) (300:\rs) };
	\end{scope}
	\draw[opacity=0.2] ({2/3},0) -- ({4/3},0)
	($0.5*(1,{1/sqrt(3)})$) -- ({2/3},0)
	($0.5*(1,{-1/sqrt(3)})$) -- ({2/3},0)
	($(1,{1/sqrt(3)})+0.5*(1,{-1/sqrt(3)})$) -- ({4/3},0)
	($0.5*(1,{1/sqrt(3)})+(1,{-1/sqrt(3)})$) -- ({4/3},0);
	\end{scope}
	\begin{scope}[shift = ($(a)-(b)$)]
	\begin{scope}[xshift = 1.33333cm]
	\draw plot [smooth cycle] coordinates {(0:\rb) (60:\rs) (120:\rb) (180:\rs) (240:\rb) (300:\rs) };
	\end{scope}
	\begin{scope}[xshift = 0.666667cm, rotate=60]
	\draw plot [smooth cycle] coordinates {(0:\rb) (60:\rs) (120:\rb) (180:\rs) (240:\rb) (300:\rs) };
	\end{scope}
	\draw[opacity=0.2] ({2/3},0) -- ({4/3},0)
	($0.5*(1,{1/sqrt(3)})$) -- ({2/3},0)
	($0.5*(1,{-1/sqrt(3)})$) -- ({2/3},0)
	($(1,{1/sqrt(3)})+0.5*(1,{-1/sqrt(3)})$) -- ({4/3},0)
	($0.5*(1,{1/sqrt(3)})+(1,{-1/sqrt(3)})$) -- ({4/3},0);
	\end{scope}
	\begin{scope}[shift = ($-1*(a)+(b)$)]
	\begin{scope}[xshift = 1.33333cm]
	\draw plot [smooth cycle] coordinates {(0:\rb) (60:\rs) (120:\rb) (180:\rs) (240:\rb) (300:\rs) };
	\end{scope}
	\begin{scope}[xshift = 0.666667cm, rotate=60]
	\draw plot [smooth cycle] coordinates {(0:\rb) (60:\rs) (120:\rb) (180:\rs) (240:\rb) (300:\rs) };
	\end{scope}
	\draw[opacity=0.2] ({2/3},0) -- ({4/3},0)
	($0.5*(1,{1/sqrt(3)})$) -- ({2/3},0)
	($0.5*(1,{-1/sqrt(3)})$) -- ({2/3},0)
	($(1,{1/sqrt(3)})+0.5*(1,{-1/sqrt(3)})$) -- ({4/3},0)
	($0.5*(1,{1/sqrt(3)})+(1,{-1/sqrt(3)})$) -- ({4/3},0);
	\end{scope}
	\end{scope}

	\draw[dashed,opacity=0.5,->] (-3.9,0.65) .. controls(-1.8,1.5) .. (1,0.7);
	\begin{scope}[scale=2.8]	
	\coordinate (a) at (1,{1/sqrt(3)});		
	\coordinate (b) at (1,{-1/sqrt(3)});	
	\coordinate (Y) at (1.8,0.45);
	\coordinate (c) at (2,0);
	\coordinate (x1) at ({2/3},0);
	\coordinate (x0) at (1,0);
	\coordinate (x2) at ({4/3},0);

	\pgfmathsetmacro{\rb}{0.25pt}
	\pgfmathsetmacro{\rs}{0.2pt}
	
	\begin{scope}[xshift = 1.33333cm]
	\draw plot [smooth cycle] coordinates {(0:\rb) (60:\rs) (120:\rb) (180:\rs) (240:\rb) (300:\rs) };
	\draw (0:\rb) node[xshift=7pt] {$D_2$};
	\end{scope}
	\begin{scope}[xshift = 0.666667cm, rotate=60]
	\draw plot [smooth cycle] coordinates {(0:\rb) (60:\rs) (120:\rb) (180:\rs) (240:\rb) (300:\rs) };
	\end{scope}
	\draw ({0.6666667-\rb},0) node[xshift=-7pt] {$D_1$};
	
	\draw (Y) node{$Y$};
	\draw[->] (0,0) -- (a) node[above,pos=0.7]{$l_1$};
	\draw[->] (0,0) -- (b) node[below,pos=0.7]{$l_2$};
	\draw (a) -- (c) -- (b);
	\draw[fill] (x1) circle(0.5pt) node[xshift=6pt,yshift=-6pt]{$x_1$}; 
	\draw[fill] (x0) circle(0.5pt) node[yshift=4pt, xshift=6pt]{$x_0$}; 
	\draw[fill] (x2) circle(0.5pt) node[xshift=6pt,yshift=4pt]{$x_2$}; 
	\draw[dashed] (1,0.7) node[right]{$p$} -- (1,-0.7);
	\end{scope}
	\end{tikzpicture}
	\caption{Illustration of the bubbly honeycomb crystal and quantities in the fundamental domain $Y$.} \label{fig:honeycomb}
\end{figure}

We will consider a general shape of the bubbles, under certain symmetry assumptions. Let $R_0$ be the rotation around $x_0$ by $\pi$, and let $R_1$ and $R_2$ be the rotations by $-\frac{2\pi}{3}$ around $x_1$ and $x_2$, respectively. These rotations can be written as
$$ R_1 x = Rx+l_1, \quad R_2 x = Rx + 2l_1, \quad R_0 x = 2x_0 - x , $$
where $R$ is the rotation by  $-\frac{2\pi}{3}$ around the origin. Moreover, let $R_3$ be the reflection across the line $p = x_0 + \R e_2$, where $e_2$ is the second standard basis element. Assume that the unit cell contains two bubbles $D_j$, $j=1,2$, each centred at $x_j$ such that
$$R_0 D_1 = D_2, \quad R_1 D_1 = D_1,\quad R_2 D_2 = D_2, \quad R_3D_1 = D_2.$$
We denote the pair of bubbles by $D=D_1 \cup D_2$.

The dual lattice of $\Lambda$, denoted $\Lambda^*$, is generated by $\alpha_1$ and $\alpha_2$ satisfying $ \alpha_i\cdot l_j = 2\pi \delta_{ij}$, for $i,j = 1,2.$ Then 
$$ \alpha_1 = \frac{2\pi}{L}\left( \frac{1}{\sqrt{3}}, 1\right),~~\alpha_2 = \frac{2\pi}{L}\left(\frac{1}{\sqrt{3}}, -1 \right).$$

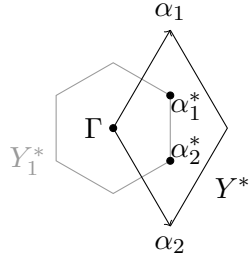
\begin{figure}
	\centering
	\begin{tikzpicture}[scale=1.3]	
	\coordinate (a) at ({1/sqrt(3)},1);		
	\coordinate (b) at ({1/sqrt(3)},-1);
	\coordinate (c) at ({2/sqrt(3)},0);
	\coordinate (K1) at ({1/sqrt(3)},{1/3});
	\coordinate (K2) at ({1/sqrt(3)},{-1/3});
	\coordinate (K3) at (0,{-2/3});
	\coordinate (K4) at ({-1/sqrt(3)},{-1/3});
	\coordinate (K5) at ({-1/sqrt(3)},{1/3});
	\coordinate (K6) at (0,{2/3});
	
	\draw[->] (0,0) -- (a) node[above]{$\alpha_1$};
	\draw[->] (0,0) -- (b) node[below]{$\alpha_2$};
	\draw (a) -- (c) -- (b) node[pos=0.4,below right]{$Y^*$};
	\draw[fill] (K1) circle(1pt) node[xshift=6pt,yshift=-4pt]{$\alpha_1^*$}; 
	\draw[fill] (K2) circle(1pt) node[xshift=6pt,yshift=4pt]{$\alpha_2^*$}; 
	\draw[fill] (0,0) circle(1pt) node[left]{$\Gamma$}; 
	
	\draw[opacity=0.4] (K1) -- (K2) -- (K3) -- (K4) node[left]{$Y_1^*$} -- (K5) -- (K6) -- cycle; 
	\end{tikzpicture}
\caption{Illustration of the dual lattice and the Brillouin zone $Y^*$.}
\label{fig:bz}
\end{figure}

The Brillouin zone $Y^*$ is defined as the torus $Y^*:= \bigslant{\R^2}{\Lambda^*}$ and can be represented either as  
$$Y^* \simeq \left\{ s \alpha_1+ t \alpha_2 ~|~ 0 \le s,t \le 1 \right\}, $$
or as the first Brillouin zone $Y_1^*$ illustrated in Figure \ref{fig:bz}.
The points $$\alpha_1^*= \frac{2\alpha_1+\alpha_2}{3}, \quad \alpha^*_2 = \frac{\alpha_1+2\alpha_2}{3} ,$$ in the Brillouin zone are called \emph{Dirac points}. For simplicity, in this work we will only consider the analysis around the Dirac point $\alpha^* := \alpha^*_1$, the main difference around $\alpha^*_2$ is summarised in Remark \ref{rmk:alpha2}.

We will denote the density and bulk modulus of the bubble by $\rho_b$ and $\kappa_b$, respectively. The corresponding parameters of the surrounding medium is denoted by $\rho$ and $\kappa$. Wave propagation in the bubbly honeycomb crystal is described by the following $\alpha$-quasi-periodic Helmholtz problem in $Y$:
\begin{equation}  \label{HP1}
\left\{
\begin{array} {lll}
&\ds \nabla \cdot \frac{1}{\rho} \nabla  u+ \frac{\omega^2}{\kappa} u  = 0 \quad &\text{in} \ Y \backslash D, \\
\nm
&\ds \nabla \cdot \frac{1}{\rho_b} \nabla  u+ \frac{\omega^2}{\kappa_b} u  = 0 \quad &\text{in} \ D, \\
\nm
&\ds  u_{+} -u_{-}  =0   \quad &\text{on} \ \partial D, \\
\nm
& \ds  \frac{1}{\rho} \frac{\partial u}{\partial \nu} \bigg|_{+} - \frac{1}{\rho_b} \frac{\partial u}{\partial \nu} \bigg|_{-} =0 \quad &\text{on} \ \partial D, \\ 
\nm
& u(x+l)= e^{\iu\alpha\cdot l} u(x) \quad & \text{for all} \ l\in \Lambda.
\end{array}
\right.
\end{equation}
Here, $\partial/\partial \nu$ denotes the normal derivative on $\partial D$, and the subscripts $+$ and $-$ indicate the limits from outside and inside $D$, respectively. A non-trivial solution to this problem and its corresponding frequency is called a Bloch eigenfunction and a Bloch eigenfrequency. The Bloch eigenfrequencies $\omega_i^\alpha, \ i=1,2,...$ with positive real part, seen as functions of $\alpha$, are known as \emph{band functions}.

Let 
\begin{equation*} \label{v}  v:=\sqrt{\frac{\kappa}{\rho}}, \ ~  v_b:=\sqrt{\frac{\kappa_b}{\rho_b}}, \ ~k=\frac{\omega}{v}, \ ~  k_b=\frac{\omega}{ v_b}.\end{equation*}
Introduce the density contrast parameter $\delta$ as 
\begin{equation*} \label{delta} 
\delta:=\frac{\rho_b}{\rho} . \end{equation*}
We assume that there is a high contrast in the density while the wave speeds are comparable, \ie{}, 
\begin{equation*} 
\delta  \ll 1 \quad \text{and} \quad v,  v_b=\O(1).
\end{equation*}

\subsection{Quasi-periodic Green's function for the honeycomb lattice} \label{sec:setup_G}
Define the $\alpha$-quasi-periodic Green's function $G^{\alpha,k}$ to satisfy
\begin{equation*}
\Delta G^{\alpha,k} + k^2G^{\alpha,k} = \sum_{n \in \Lambda} \delta(x-n)e^{\iu\alpha\cdot n}.
\end{equation*}
Then it can be shown that $G^{\alpha,k}$ is given by  \cite{surv235,akl} 
\begin{equation} \label{eq:green}
G^{\alpha,k}(x)= \frac{1}{|Y|}\sum_{q\in \Lambda^*} \frac{e^{\iu(\alpha+q)\cdot x}}{ k^2-|\alpha+q|^2}.
\end{equation}

For a given bounded domain $D$ in  $Y$, with Lipschitz boundary $\partial D$, the single layer potential $\Scal_D^{\alpha,k}: L^2(\partial D) \rightarrow H_{\textrm{loc}}^1(\R^2)$  is defined by
\begin{equation*}
\mathcal{S}_{D}^{\alpha,k}[\varphi](\Bx):=\int_{\partial D} G^{\alpha,k}( \Bx-\By ) \varphi(\By) \dx \sigma(\By),~~~\Bx\in \mathbb{R}^{2}.
\end{equation*}
Here, we denote by $H_{\textrm{loc}}^1(\R^2)$ the space of functions that are square integrable on every compact subset of $\R^2$ and have a weak first derivative that is also square integrable. The following jump relations are well-known \cite{surv235,akl}:
\begin{equation*} \label{jump1}
\left. \frac{\partial}{\partial\nu} \mathcal{S}_{D}^{\alpha,k}[\varphi]\right|_{\pm}(\Bx)=\left (\pm\frac{1}{2}I+(\mathcal{K}_{D}^{-\alpha,k})^*\right)[\varphi](\Bx),~~~\Bx\in \partial D,
\end{equation*}
where the Neumann-Poincar\'e operator $(\mathcal{K}_{D}^{-\alpha,k})^*: L^2(\p D) \rightarrow L^2(\p D)$ is defined as
\begin{equation*}
(\mathcal{K}_{D}^{-\alpha,k})^*[\varphi](x)=\text{p.v.}\int_{\partial D}\frac {\p }{\p \nu_x}G^{\alpha,k}(\Bx -\By) \varphi(\By) \dx \sigma(\By),~~~\Bx\in \partial D.
\end{equation*}
The Green's function can be asymptotically expanded for small $k$ as follows \cite{surv235,honeycomb}:
\begin{equation*}
G^{\alpha,k}= G^{\alpha,0}+ k^{2} G_1^{\alpha} + \O(k^4), \qquad G_1^{\alpha}(x):=-\sum_{q\in\Lambda^*}\frac{e^{\iu(\alpha+q)\cdot x}}{ |\alpha+q|^{2}},
\end{equation*}
where the error term is uniform in $\alpha$ in a neighbourhood of $\alpha^*$ and $x\in Y$. This leads to the following expansion of the single layer potential $\Scal_D^{\alpha,k}: L^2(\partial D) \rightarrow H^1(\p D)$ 
\begin{equation} \label{eq:expS}
\Scal_D^{\alpha,k} = \Scal_D^{\alpha,k} + k^{2} \Scal^{\alpha}_{D,1} + \O(k^4), \quad \Scal^{\alpha}_{D,1}[\phi](x) := \int_{\p D} G_1^{\alpha}(x-y) \phi(y) \dx \sigma(y),
\end{equation}
where $\O(k^4)$ denotes an operator $ L^2(\partial D) \rightarrow H^1(\p D)$ with operator norm of order $k^4$, uniformly for $\alpha$ in a neighbourhood of $\alpha^*$. Similarly, for the Neumann-Poincar\'e operator, we have
\begin{equation} \label{eq:expK}
(\Kcal_D^{-\alpha,k})^* = (\Kcal_D^{-\alpha,0})^* + k^{2} \mathcal{K}^{\alpha}_{D,1} + \O(k^4), \quad \mathcal{K}^{\alpha}_{D,1} [\phi](x) := \int_{\p D} \frac{\p }{\p \nu_x}G_1^{\alpha}(x-y) \phi(y) \dx \sigma(y),
\end{equation} 
where $\O(k^4)$ denotes an operator $ L^2(\partial D) \rightarrow L^2(\partial D)$ with operator norm of order $k^4$, uniformly for $\alpha$ in a neighbourhood of $\alpha^*$.

It is known that $\Scal_D^{\alpha,0}: L^2(\partial D) \rightarrow H^1(\p D)$ is invertible when $\alpha \neq 0$ \cite{surv235,akl}. Let $\psi_j^{\alpha}\in L^2(\p D)$  be given by
\begin{align} \label{psi_def}
\mathcal{S}_D^{\alpha,0}[\psi_j^{\alpha}] = \chi_{\partial D_j}\quad \mbox{on}~\partial D,\quad j=1, 2,
\end{align}
where $\chi$ denotes the indicator function. Define the capacitance coefficient matrix $C^\alpha=(C_{ij}^\alpha)$ by
$$ C_{ij}^\alpha := - \int_{\partial D_i} \psi_j^\alpha \dx \sigma,\quad i,j=1, 2.$$
Using the symmetry of the honeycomb structure, it was shown in \cite{honeycomb} that the capacitance coefficients satisfy 
$$c_1^\alpha := C_{11}^\alpha = C_{22}^{\alpha}, \quad c_2^\alpha := C_{12}^{\alpha} = \overline{C_{21}^{\alpha}},$$ 
and
\begin{equation}\label{c1c2_deri}
\nabla_\alpha c_1^\alpha \Big|_{\alpha=\alpha^*} = 0,
\quad
\nabla_\alpha c_2^\alpha \Big|_{\alpha=\alpha^*} = c\begin{pmatrix} 1\\-\iu\end{pmatrix},
\end{equation}
where we denote $$c:=\pd{c_2^{\alpha}}{\alpha_1}\Big|_{\alpha=\alpha^*}.$$
In \cite[Lemma 3.4]{honeycomb}, it was shown that $c\neq0$.
Note that the capacitance matrix $C^\alpha$ is written as
\begin{equation*}
C^\alpha = \begin{pmatrix}
c_1^\alpha & c_2^\alpha \\
\overline{c_2^\alpha} & c_1^\alpha
\end{pmatrix}.
\end{equation*}

\subsection{ Dirac cone dispersion in the band structure}

The solution to \eqnref{HP1} can be represented using
the single layer potentials $\Scal_D^{\alpha, k_b}$ and $\Scal_D^{\alpha,k}$ as 
follows (see, for instance, \cite{akl}):
\begin{equation}\label{eq:u}
u(\Bx) = \begin{cases}
\Scal_D^{\alpha, k_b} [\phi](\Bx),\quad \Bx\in D, \\
\Scal_D^{ \alpha,k}[\psi](\Bx),\quad \Bx\in Y\setminus\overline{D},
\end{cases}
\end{equation}
where the pair $(\phi,\psi)\in L^2(\p D)\times L^2(\p D)$ satisfies
\begin{equation} \label{int_eq12}
\mathcal{A}_\delta^{\alpha,\omega} \begin{pmatrix} \phi \\ \psi \end{pmatrix}=0.
\end{equation}
Here, the operator $\Acal_\delta^{\alpha,\omega}$ is defined by
\begin{equation}\label{eq:A}
\Acal_\delta^{\alpha,\omega} := 
\begin{bmatrix}
	\Scal^{\alpha, k_b}_D & -\Scal^{\alpha, k}_D\\
	-\frac{1}{2}I+(\mathcal{K}_{D}^{-\alpha, k_b})^*  & -\delta \left (\frac{1}{2}I+(\mathcal{K}_{D}^{-\alpha,k})^*\right) 
\end{bmatrix}.
\end{equation}
It is well-known that the integral equation \eqref{int_eq12} has a non-trivial solution 
for some discrete frequencies $\omega$, and the frequencies with positive real part are the band functions $\omega_i^\alpha, \ i=1,2,...$

It was shown in \cite{honeycomb} that, for the honeycomb structure, the first two band functions $\omega_1^\alpha$ and $\omega_2^\alpha$ form a conical dispersion relation near the Dirac point $\alpha^*$. Such a conical dispersion is referred to as a {\it Dirac cone}. More specifically, the following theorem was proved in \cite{honeycomb} (Theorems 3.2 and 4.1 from \cite{honeycomb}). It is worth emphasizing that the following results hold in the deep sub-wavelength regime. 
\begin{theorem}\label{thm:honeycomb}
	For small $\delta$, the first two band functions $\omega_j^\alpha,j=1,2$, satisfy 
	\begin{equation} \label{eq:wasymp}
	\omega_j^\alpha = \sqrt{\frac{\delta\lambda_j^\alpha }{|D_1|}}v_b + \O(\delta),
	\end{equation}
	uniformly for $\alpha$ in a neighbourhood of $\alpha^*$, where $\lambda_j^\alpha, j=1,2,$ are the two eigenvalues of $C^\alpha$ and $|D_1|$ denotes the area of one of the bubbles. Moreover, for $\alpha$ close to $\alpha^*$ and $\delta$ small enough, the first two band functions form a Dirac cone, \ie{},
	\begin{equation} \label{eq:dirac}
	\begin{matrix}
	\ds \omega_1^\alpha = \omega^*- \lambda|\alpha - \alpha^*| \big[ 1+ \O(|\alpha-\alpha^*|) \big], \\[0.5em]
	\ds \omega_2^\alpha = \omega^*+ \lambda|\alpha - \alpha^*| \big[ 1+ \O(|\alpha-\alpha^*|) \big],
	\end{matrix}
	\end{equation}
	where $\omega^*$ and $\lambda$ are independent of $\alpha$ and satisfy
	$$\omega^*= \sqrt{\frac{\delta c_1^{\alpha^*}}{|D_1|}}v_b + \O(\delta) \quad \text{and} \quad \lambda =  |c|\sqrt\delta\lambda_0 + \O(\delta),  \quad \lambda_0=\frac{1}{2}\sqrt{\frac{v_b^2 }{|D_1|c_1^{\alpha^*}}}$$
	as $\delta \rightarrow 0$. Moreover, the error term $\O(|\alpha-\alpha^*|)$ in (\ref{eq:dirac}) is uniform in $\delta$.
\end{theorem}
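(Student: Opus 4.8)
The plan is to locate the two characteristic values $\omega_j^\alpha(\delta)$ of $\Acal_\delta^{\alpha,\omega}$ that bifurcate from $\omega=0$ (whose existence is already guaranteed by the Gohberg--Sigal argument recalled above) by reducing the infinite-dimensional equation \eqref{int_eq12} to a $2\times2$ problem governed by the capacitance matrix $C^\alpha$. First I would insert the small-$\omega$ expansions \eqref{eq:expS}--\eqref{eq:expK} into \eqref{eq:A}, with $k=\omega/v$ and $k_b=\omega/v_b$, to write
\[
\Acal_\delta^{\alpha,\omega} = \Acal_0^{\alpha,0} + \omega^2 \Acal_1 + \delta\, \Acal_2 + \O(\omega^4) + \O(\delta\omega^2),
\]
where $\Acal_0^{\alpha,0}$ is the operator with the known two-dimensional kernel $\mathrm{span}\{\Psi_1^\alpha,\Psi_2^\alpha\}$. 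Since the anticipated scaling is $\omega^2\sim\delta$, the two perturbations balance. A Lyapunov--Schmidt reduction onto this kernel then produces a $2\times2$ matrix whose singularity is the characteristic equation; the Fredholm solvability condition is that the projected equation be orthogonal to the cokernel of $\Acal_0^{\alpha,0}$, which is spanned by the constants $\chi_{\partial D_i}$.

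To carry out the reduction, write $\phi = \sum_j a_j\psi_j^\alpha + \O(\omega^2)$ from the kernel structure; the first row of \eqref{int_eq12} then gives $\psi=\phi+\O(\omega^2)$ because $\Scal_D^{\alpha,0}$ is invertible. The capacitance matrix emerges from integrating the second row over each $\partial D_i$, using two boundary identities: (i) applying the divergence theorem to the harmonic function $\Scal_D^{\alpha,0}[\vp]$ inside $D_i$ shows $\int_{\partial D_i}\bigl(-\tfrac12 I+(\Kcal_D^{-\alpha,0})^*\bigr)[\vp]\dx\sigma=0$, whence $\int_{\partial D_i}\bigl(\tfrac12 I+(\Kcal_D^{-\alpha,0})^*\bigr)[\vp]\dx\sigma=\int_{\partial D_i}\vp\dx\sigma$; and (ii) using $\Delta G_1^\alpha=-G^{\alpha,0}$ together with the divergence theorem, $\int_{\partial D_i}\mathcal{K}^{\alpha}_{D,1}[\psi_j^\alpha]\dx\sigma = -\int_{D_i}\Scal_D^{\alpha,0}[\psi_j^\alpha]\dx x = -\delta_{ij}|D_i|$. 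Recalling $\int_{\partial D_i}\psi_j^\alpha\dx\sigma=-C_{ji}^\alpha$ and $|D_1|=|D_2|$, the integrated second row reduces, to leading order, to
\[
\frac{\omega^2 |D_1|}{v_b^2}\,a_i = \delta \sum_j C_{ji}^\alpha a_j,
\]
so $\omega^2$ is an eigenvalue of $\tfrac{\delta v_b^2}{|D_1|}C^\alpha$ (a transpose has the same spectrum). This gives \eqref{eq:wasymp}, with the $\O(\delta)$ error absorbing the discarded higher-order terms.

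For \eqref{eq:dirac} I would diagonalise $C^\alpha=\begin{pmatrix}c_1^\alpha & c_2^\alpha\\ \overline{c_2^\alpha} & c_1^\alpha\end{pmatrix}$, whose eigenvalues are $\lambda_\pm^\alpha=c_1^\alpha\pm|c_2^\alpha|$. The conical structure hinges on the degeneracy $c_2^{\alpha^*}=0$ at the Dirac point, which I would obtain from the $C_3$-rotation symmetry $R$: at $\alpha^*$ the rotated quasi-periodicity phase forces the off-diagonal coefficient to equal itself times a nontrivial cube root of unity, hence to vanish. Writing $\beta=\alpha-\alpha^*$ and using \eqref{c1c2_deri}, one has $c_1^\alpha=c_1^{\alpha^*}+\O(|\beta|^2)$ and $c_2^\alpha=\overline{c}(\beta_1+i\beta_2)+\O(|\beta|^2)$, so $|c_2^\alpha|=|c|\,|\beta|\,(1+\O(|\beta|))$. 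Substituting $\lambda_\pm^\alpha$ into \eqref{eq:wasymp} and Taylor-expanding the square root about $c_1^{\alpha^*}$ yields
\[
\omega_\pm^\alpha = \omega^* \pm |c|\sqrt{\delta}\,\lambda_0\,|\alpha-\alpha^*|\bigl(1+\O(|\alpha-\alpha^*|)\bigr),
\]
with $\omega^*$ and $\lambda_0$ exactly as stated, the coefficient $|c|$ coinciding with $c$ under the normalisation chosen in \cite{honeycomb}.

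The main obstacle is the rigorous control of the reduction rather than the formal computation. Because the two branches meet at $\alpha^*$, the eigenvalues $\lambda_j^\alpha$ are nearly degenerate there, so they cannot be perturbed as isolated simple eigenvalues; the $2\times2$ block must be tracked as a whole, and one must show that the neglected $\O(\omega^4)$ and $\O(\delta\omega^2)$ contributions perturb $\omega_j^\alpha$ only at order $\delta$ and, crucially, that the relative error $\O(|\alpha-\alpha^*|)$ in the cone is \emph{uniform in $\delta$}. This demands quantitative invertibility of $\Acal_0^{\alpha,0}$ on the complement of its kernel, uniformly for $\alpha$ near $\alpha^*$, together with careful bookkeeping of the coupled expansion in the two small parameters $\omega$ and $\delta$; establishing this uniformity is the delicate technical heart of the argument.
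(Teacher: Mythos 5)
Your proposal follows essentially the same route as the paper: this theorem is imported from \cite{honeycomb} (Theorems 3.1 and 4.1), and the reduction you describe is exactly the one the paper itself reproduces in Section \ref{sec:efunc} --- small-$\omega$/small-$\delta$ expansion of $\Acal_\delta^{\alpha,\omega}$, projection onto the kernel spanned by $\Psi_1^\alpha,\Psi_2^\alpha$ by integrating the second row over $\partial D_1$ and $\partial D_2$ with the identity $\int_{\partial D_j}\Kcal^{\alpha}_{D,1}[\phi]\dx\sigma=-\int_{D_j}\Scal_D^{\alpha,0}[\phi]\dx x$, the resulting capacitance-matrix eigenvalue problem $\omega^2\sim\delta v_b^2\lambda_j^\alpha/|D_1|$, and the expansion \eqref{c1c2_deri} with $c_2^{\alpha^*}=0$ from the threefold symmetry, yielding eigenvalues $c_1^\alpha\pm|c_2^\alpha|$ and the cone. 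The only point where you diverge is in overestimating the final step: the uniformity in $\delta$ of the error $\O(|\alpha-\alpha^*|)$, which you flag as the delicate technical heart, is dispatched by the paper in one line --- it ``follows directly from \eqref{eq:wasymp}'' because the eigenvalues $\lambda_j^\alpha$ of $C^\alpha$ are independent of $\delta$, so the $\alpha$-dependence of the leading term factorizes as $\sqrt{\delta}$ times a $\delta$-independent function of $\alpha$.
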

In the next sections, we will investigate the asymptotic behaviour of the  Bloch eigenfunctions  near the Dirac points. Then we shall prove that the envelopes of the Bloch eigenfunctions satisfy a Helmholtz equation with near-zero effective refractive index and  derive a two-dimensional homogenized equation of Dirac-type for the honeycomb bubbly crystal.

\section{Bloch eigenfunctions near Dirac points} \label{sec:efunc}
In this section, we study the Bloch eigenfunctions in the regime close to $\alpha^*$, following the approach of \cite{homogenization}. We will perform an asymptotic analysis with two small parameters. For $f,g\in C(\R^2,\R)$, depending on the small parameters $\epsilon_1,\epsilon_2 \in \R$, we will use the notation
$$f(\epsilon_1,\epsilon_2) = \O(g(\epsilon_1, \epsilon_2))$$
to denote that there is some $K > 0$, constant in $\epsilon_1, \epsilon_2$, such that $|f(\epsilon_1,\epsilon_2)| < K|g(\epsilon_1, \epsilon_2)|$ for all $(\epsilon_1, \epsilon_2)$ in some neighbourhood of $(0,0)$.

\subsection{Asymptotic behaviour of the single layer potential}
Here we consider the single layer potential $\Scal_D^{\alpha,k}$ near a Dirac point $\alpha = \alpha^*$.
\begin{lemma}\label{lem:S_asymp}
	Let $\psi_j^\alpha$, for $j=1,2,$ be defined by (\ref{psi_def}). For $j=1,2$ and for $\alpha$ near a Dirac point $\alpha^*$, \ie{} $\alpha = \alpha^* + \epsilon \tilde\alpha$ with small $\epsilon >0$ and fixed $\tilde\alpha$, we have
	\begin{equation} \label{eq:S_asymp}
	\Scal_D^{\alpha^* + \epsilon \tilde{\alpha},k}\left[\psi_j^{\alpha^* + \epsilon \tilde{\alpha}}\right](x) = e^{\iu\epsilon \tilde{\alpha}\cdot x}\Scal_D^{\alpha^*,0}\big[\xi^{\epsilon\tilde{\alpha}}_j\big](x) + \O(\epsilon^2 + \epsilon k^2), \qquad x\in \mathbb{R},
	\end{equation}
	where $$\xi^{\epsilon\tilde{\alpha}}_j := \left(\Scal_D^{\alpha^*,0}\right)^{-1}\left[e^{-\iu\epsilon\tilde{\alpha}\cdot y}\chi_{\partial D_j}(y)\right].$$
\end{lemma}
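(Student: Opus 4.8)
The plan is to combine an exact phase factorization of the quasi-periodic Green's function with the small-$k$ expansion \eqnref{eq:expS}, and then to pin down the ``straightened'' density $\xi_j^{\epsilon\tilde\alpha}$ using the defining identity \eqnref{psi_def}. Write $\beta=\epsilon\tilde\alpha$. From the spectral representation \eqnref{eq:green} one has the elementary identity
\beq
G^{\alpha^*+\beta,0}(x)=e^{i\beta\cdot x}\,\widehat G^{\beta}(x),\qquad
\widehat G^{\beta}(x):=-\frac{1}{|Y|}\sum_{q\in\Lambda^*}\frac{e^{i(\alpha^*+q)\cdot x}}{|\alpha^*+\beta+q|^{2}},
\eeq
so that $\widehat G^{0}=G^{\alpha^*,0}$. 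Because $\alpha^*\notin\Lambda^*$, every denominator $|\alpha^*+\beta+q|^{2}$ stays bounded away from zero for small $\beta$, and the summand of $\widehat G^{\beta}-G^{\alpha^*,0}$ decays like $|q|^{-3}$. Hence the associated single-layer operator $\widehat\Scal^{\beta}[\phi](x):=\int_{\partial D}\widehat G^{\beta}(x-y)\phi(y)\dx\sigma(y)$ depends analytically on $\beta$, coinciding with $\Scal_D^{\alpha^*,0}$ at $\beta=0$ and differing from it by $\O(\epsilon)$ in operator norm on $L^2(\partial D)$; this quantitative smoothness is the ingredient on which the whole estimate rests.

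Next I would use \eqnref{eq:expS} to strip the $k$-dependence, replacing $\Scal_D^{\alpha^*+\beta,k}$ by $\Scal_D^{\alpha^*+\beta,0}$ at the cost of $k^2\Scal^{\alpha^*+\beta}_{D,1}$, and then pull out the phase:
\beq
\Scal_D^{\alpha^*+\beta,0}\big[\psi_j^{\alpha^*+\beta}\big](x)=e^{i\beta\cdot x}\,\widehat\Scal^{\beta}[\eta_j](x),\qquad \eta_j(y):=e^{-i\beta\cdot y}\psi_j^{\alpha^*+\beta}(y).
\eeq
The crucial algebraic step is to compare $\eta_j$ with $\xi_j^{\beta}$. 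Restricting the previous line to $\partial D$ and invoking \eqnref{psi_def} shows that $\widehat\Scal^{\beta}[\eta_j]=e^{-i\beta\cdot x}\chi_{\partial D_j}$ on $\partial D$, which is precisely the equation defining $\xi_j^{\beta}$ but with the operator $\Scal_D^{\alpha^*,0}$. Writing $\widehat\Scal^{\beta}=\Scal_D^{\alpha^*,0}+\O(\epsilon)$ and inverting with the uniformly bounded $(\Scal_D^{\alpha^*,0})^{-1}$ (available since $\alpha^*\neq0$) gives $\eta_j=\xi_j^{\beta}+\O(\epsilon)$; substituting back reproduces the claimed leading term $e^{i\beta\cdot x}\Scal_D^{\alpha^*,0}[\xi_j^{\beta}](x)$.

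The delicate point, and the part I expect to be the real obstacle, is extracting the remainder at the stated order $\O(\epsilon^2+\epsilon k^2)$ rather than the naive $\O(\epsilon+k^2)$. The only mechanism available to suppress the bare first-order-in-$\epsilon$ term is the \emph{exact} coincidence of boundary traces: on $\partial D$ both $\Scal_D^{\alpha^*+\beta,0}[\psi_j^{\alpha^*+\beta}]$ and $e^{i\beta\cdot x}\Scal_D^{\alpha^*,0}[\xi_j^{\beta}]$ equal $\chi_{\partial D_j}$ identically, which forces every $\O(\epsilon)$ contribution to cancel there. I would therefore carry out the bookkeeping at the level of the boundary integral equation, where this exact matching applies, expanding $\widehat G^{\beta}$ to second order in $\beta$ and \eqnref{eq:expS} to order $k^2$, and verifying through the analyticity of $\alpha\mapsto\psi_j^\alpha$ and the explicit form of $\Scal^{\alpha}_{D,1}$ that the surviving contributions are the second-order term $\O(\epsilon^2)$ and the cross term $\O(\epsilon k^2)$. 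Confirming that no bare $\O(\epsilon)$ or $\O(k^2)$ term persists is exactly the crux, and it is this step that the earlier analytic estimate on $\widehat\Scal^\beta$ is designed to feed.

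The subtlety I would flag most strongly is that the factorization is genuinely only approximate \emph{away} from $\partial D$: the function $w:=e^{-i\beta\cdot x}\Scal_D^{\alpha^*+\beta,0}[\psi_j^{\alpha^*+\beta}]$ solves the drift-perturbed equation $\Delta w+2i\beta\cdot\nabla w-|\beta|^2 w=0$ rather than the plain $\alpha^*$-periodic Laplace equation obeyed by $\Scal_D^{\alpha^*,0}[\xi_j^{\beta}]$, so the first-order-in-$\epsilon$ terms that cancel on $\partial D$ do not manifestly cancel at interior points. Closing the argument for all $x\in\R^2$ therefore requires either exploiting that only the boundary trace enters the subsequent characteristic-value computation, or showing that the residual interior term itself retains the two-scale (slowly modulated times periodic) structure at the asserted lower order. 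This interior control is where I anticipate the main work.
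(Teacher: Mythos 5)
Your route is essentially the paper's own. The paper Taylor-expands the Green's function and writes $G^{\alpha^*+\epsilon\tilde\alpha,k}(x)=e^{i\epsilon\tilde\alpha\cdot x}\big(G^{\alpha^*,k}(x)+G_1^{\epsilon\tilde\alpha,k}(x)\big)+\O(\epsilon^2)$ --- which is precisely the first-order truncation of your exact factorization $\widehat G^{\beta}$ --- then pulls the phases out of the layer potential as in \eqnref{eq:S_exp}, and inverts \eqnref{psi_def} by a Neumann series to get $e^{-i\epsilon\tilde\alpha\cdot y}\psi_j^{\alpha^*+\epsilon\tilde\alpha}=\big(I-(\Scal_D^{\alpha^*,0})^{-1}\Scal_1^{\epsilon\tilde\alpha,0}\big)\big[\xi_j^{\epsilon\tilde\alpha}\big]+\O(\epsilon^2)$, after which the first-order operator cancels between the kernel expansion and the density expansion --- exactly the mechanism you describe. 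Your exact factorization is a mild tidying of the same computation, and the definition and role of $\xi_j^{\epsilon\tilde\alpha}$ coincide. One refinement to your write-up: do not stop at $\eta_j=\xi_j^{\beta}+\O(\epsilon)$ (which only yields an $\O(\epsilon)$ remainder); keep the explicit first-order Neumann term $\eta_j=\xi_j^{\beta}-(\Scal_D^{\alpha^*,0})^{-1}\Scal_1^{\beta,0}\big[\xi_j^{\beta}\big]+\O(\epsilon^2)$, since it is this term that cancels $\Scal_1^{\beta,0}$ and suppresses the bare $\O(\epsilon)$ contribution at the operator level.

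Concerning the two points you defer: the ``interior control'' you anticipate as the main work is in fact \emph{not carried out in the paper}. The paper's final display cancels $\Scal_1^{\epsilon\tilde\alpha,k}$ against $\Scal_D^{\alpha^*,k}\big(\Scal_D^{\alpha^*,0}\big)^{-1}\Scal_1^{\epsilon\tilde\alpha,0}$ up to $\O(\epsilon k^2)$ for all $x\in\R^2$, but this identity is exact only on $\partial D$, where $\big(\Scal_D^{\alpha^*,0}\big)^{-1}$ inverts the boundary restriction; off $\partial D$, exactly as you observe, $\Scal_1^{\epsilon\tilde\alpha,0}[\xi_j]$ is not harmonic while its single-layer re-extension is, leaving an $\O(\epsilon)$ discrepancy vanishing only on the boundary. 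Likewise, replacing $\Scal_D^{\alpha^*,k}[\xi_j]$ by $\Scal_D^{\alpha^*,0}[\xi_j]$ costs a bare $k^2\Scal_{D,1}^{\alpha^*}[\xi_j]=\O(k^2)$, which the stated remainder $\O(\epsilon^2+\epsilon k^2)$ does not display. Neither defect is repaired in the paper, and neither matters downstream: in \eqnref{eq:Bloch_eigen_asymp} one has $k^2=\O(\delta)$, which absorbs the $k^2$ term, and in Lemma \ref{lem:Bloch_scale} and Theorem \ref{thm:main} one sets $\epsilon=s$ and in any case replaces $\xi_j^{s\tilde\alpha}$ by $\psi_j^{\alpha^*}$ at cost $\O(s)$, so an interior $\O(\epsilon)$ defect is absorbed into the final $\O(s)$ error --- your first suggested remedy, made quantitative. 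In short, the steps you actually carried out constitute the paper's entire proof; the step you flag as the crux is precisely the one the paper silently elides, and no further idea is needed for the lemma's use in the paper provided its remainder is read with these harmless first-order interior and $\O(k^2)$ contributions included.
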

\begin{proof}
As in \cite{honeycomb}, we have the Taylor expansion 
\begin{align*}
G^{\alpha^*+\epsilon\tilde{\alpha},k}(x) &= G^{\alpha^*,k}(x)+ \frac{1}{|Y|}\sum_{q\in\Lambda^*} \frac{e^{\iu(\alpha^*+q)\cdot x}}{ k^2-|\alpha^*+q|^2}  \left( \iu\epsilon\tilde\alpha\cdot x + 2\frac{(\alpha^*+q)\cdot\epsilon\tilde\alpha}{k^2-|\alpha^*+q|^2}\right)+\O(\epsilon^2) \\
&= e^{\iu\epsilon\tilde{\alpha}\cdot x}\left(G^{\alpha^*,k}(x) + G_1^{\epsilon\tilde{\alpha},k}(x)\right) + \O(\epsilon^2),
\end{align*}
uniformly for $k$ in a neighbourhood of $0$, where
$$ G_1^{\epsilon\tilde\alpha, k}(x) =  \frac{2}{|Y|}\sum_{q\in\Lambda^*} \frac{e^{\iu(\alpha^*+q)\cdot x}(\alpha^*+q)\cdot\epsilon\tilde\alpha}{ \left(k^2-|\alpha^*+q|^2\right)^2}.$$
We define the corresponding operator $\Scal_1^{\epsilon\tilde\alpha, k}: L^2(\p D) \rightarrow H^1(\p D)$ as
$$\Scal_1^{\epsilon\tilde\alpha, k}[\varphi](x) = \int_{\p D} G_1^{\epsilon\tilde\alpha,k}(x-y) \varphi(y) \dx \sigma(y).$$
Observe that, in the operator norm, $\Scal_1^{\epsilon\tilde\alpha, k} = \O(\epsilon)$, uniformly in $k$. With these definitions, we have the following expansion of the single layer potential for $\alpha$ close to $\alpha^*$:
\begin{equation}
\Scal_D^{\alpha^* + \epsilon\tilde{\alpha},k}[\varphi](x) = e^{\iu\epsilon \tilde\alpha\cdot x}\left(\Scal_D^{\alpha^*,k} + \Scal_1^{\epsilon\tilde\alpha, k}\right)\left[e^{-\iu\epsilon\tilde{\alpha}\cdot y}\varphi(y)\right](x) + \O(\epsilon^2), \label{eq:S_exp}
\end{equation}
uniformly in $k$. Using the Neumann series, we can then compute
\begin{align}
\psi^{\alpha^* + \epsilon\tilde{\alpha}}_j(y) &= \left( \Scal_D^{\alpha^*+\epsilon\tilde{\alpha},0}\right)^{-1}\left[\chi_{\partial D_j}\right](y) \nonumber \\ 
&= e^{\iu\epsilon\tilde{\alpha}\cdot y}\left(I - \left(\Scal_D^{\alpha^*,0}\right)^{-1} \Scal_1^{\epsilon\tilde{\alpha},0}\right)\left(\Scal_D^{\alpha^*,0}\right)^{-1}\left[e^{-\iu\epsilon\tilde{\alpha}\cdot x}\chi_{\partial D_j}(x)\right](y) + \O(\epsilon^2) \nonumber \\
&= e^{\iu\epsilon\tilde{\alpha}\cdot y}\left(I - \left(\Scal_D^{\alpha^*,0}\right)^{-1} \Scal_1^{\epsilon\tilde{\alpha},0}\right)\big[\xi^{\epsilon\tilde{\alpha}}_j\big](y) + \O(\epsilon^2),\qquad y \in \p D. \label{eq:psi_exp}
\end{align}
Then, combining \eqnref{eq:S_exp} and \eqnref{eq:psi_exp} and using the fact that $\Scal_D^{\alpha,k} = \Scal_D^{\alpha,0} +\O(k^2)$, uniformly for $\alpha$ in a neighbourhood of $\alpha^*$, we obtain that
\begin{align*}
\Scal_D^{\alpha^* + \epsilon \tilde{\alpha},k}\left[\psi_j^{\alpha^* + \epsilon \tilde{\alpha}}\right](x) &= e^{\iu\epsilon \tilde\alpha\cdot x}\left(\Scal_D^{\alpha^*,k} + \Scal_1^{\epsilon\tilde\alpha, k} - \Scal_D^{\alpha^*,k}\left(\Scal_D^{\alpha^*,0}\right)^{-1} \Scal_1^{\epsilon\tilde{\alpha},0} \right)\left[\xi^{\epsilon\tilde{\alpha}}_j\right](x) + \O(\epsilon^2) \\
&= e^{\iu\epsilon \tilde\alpha\cdot x}\Scal_D^{\alpha^*,0}\big[\xi^{\epsilon\tilde{\alpha}}_j\big](x) + \O(\epsilon^2 + \epsilon k^2),
\end{align*}
which concludes the proof.
\end{proof}
\begin{remark} \label{rmk:xi}
	From the definition of $\xi^{\epsilon\tilde{\alpha}}_j$, we see that 
	\begin{equation} \label{eq:xi}
	\xi^{\epsilon\tilde{\alpha}}_j = \psi^{\alpha^*}_j + \O(\epsilon).
	\end{equation}
	Hence, equation \eqnref{eq:S_asymp} contains two terms of order $\epsilon$: one term from the Taylor expansion of $e^{i\epsilon\tilde{\alpha}\cdot x}$ and one term from the Taylor expansion of $e^{i\epsilon\tilde{\alpha}\cdot y}$ inside $\xi^{\epsilon\tilde{\alpha}}_j$. Here, $x$ and $y$ varies on different scales: $x\in \R^2$ while $y\in \p D$. Since $y$ varies on a much smaller scale, the latter term will be negligible. This will be made precise in Section \ref{sec:hom}.
\end{remark}

\begin{remark}
	Equation \eqnref{eq:S_exp} differs from \cite[Lemma 4.4]{homogenization} by the $\epsilon$-order term $\Scal_1$ and the factor $e^{-\iu\epsilon \tilde \alpha\cdot y}$, which, in general, do not cancel. This is due to small miscalculations in \cite{homogenization}. Nevertheless, under the homogenization setting defined in Section \ref{sec:hom} and \cite[Section 5]{homogenization}, these factors only contributes to the higher orders in the expansions, and does not affect the final theorems. Therefore, despite the miscalculations in \cite{homogenization}, the main result, \cite[Theorem 5.4]{homogenization} is still correct.
\end{remark}

\subsection{Bloch eigenfunctions near the Dirac points}

Here, we consider the asymptotic behaviour of the Bloch eigenfunctions near the Dirac points.

Let us assume that $\alpha$ is close to the Dirac point $\alpha^*$, \ie{}, $\alpha = \alpha^* + \epsilon \tilde{\alpha}$ for small $\epsilon>0$. Let $u^\alpha$ be the Bloch eigenfunction with the Bloch eigenfrequency $\omega^\alpha$. In other terms, $u^\alpha$ is given by
$$
u^\alpha(\Bx) = \begin{cases}
	\Scal_D^{\alpha, k_b} [\phi^\alpha](\Bx),\quad \Bx\in D, \\
	\Scal_D^{ \alpha,k}[\psi^\alpha](\Bx),\quad \Bx\in Y\setminus\overline{D},
\end{cases}
$$
where the pair $(\phi^\alpha,\psi^\alpha)\in L^2(\p D)\times L^2(\p D)$ satisfies
\begin{equation} \label{int_eq123}
\mathcal{A}_\delta^{\alpha,\omega^\alpha} \begin{pmatrix} \phi^\alpha \\ \psi^\alpha \end{pmatrix}=0.
\end{equation}

We know from Theorem \ref{thm:honeycomb}  that $\omega^\alpha = \O(\sqrt{\delta})$, uniformly in $\epsilon$. So, for $\delta$ small enough and using \eqnref{eq:expS} and \eqnref{eq:expK}, the integral equation \eqnref{int_eq123} can be approximated by
\begin{align}
\begin{cases}
\Scal_D^{\alpha,0}[\phi^\alpha]- \Scal_D^{\alpha, 0}[\psi^\alpha] = \O(\delta),
\\[0.5em]
\left(-\frac{1}{2}I + (\Kcal_D^{-\alpha,0})^* + k_b^2 \Kcal_{D,1}^{\alpha}\right)[\phi^\alpha]-\delta \left(\frac{1}{2}I+(\mathcal{K}_{D}^{-\alpha,0})^*\right)[\psi^\alpha]= \O(\delta^2),
\end{cases}
\label{eq_line2}
\end{align}
uniformly for $\alpha$ in a neighbourhood of $\alpha^*$. Since $\mathcal{S}_D^{\alpha,0}$ is invertible, and the inverse is bounded for $\alpha$ in a neighbourhood of $\alpha^*$, the first equation in \eqnref{eq_line2} implies
\begin{equation}  \label{eq_line2-a}
\psi^\alpha= \phi^\alpha+ \O(\delta),
\end{equation}
uniformly in $\alpha$. Substituting \eqnref{eq_line2-a} into the second equation in \eqnref{eq_line2}, we have
\begin{equation}
\left(-\frac{1}{2}I + (\Kcal_D^{-\alpha,0})^* + k_b^2 \Kcal_{D,1}^{\alpha}\right)[\phi^\alpha]-\delta \left(\frac{1}{2}I+(\mathcal{K}_{D}^{-\alpha,0})^*\right)[\phi^\alpha]= \O(\delta^2), \label{int_eq_redc}
\end{equation}
uniformly in $\alpha$. Since
$ \ker \left(-\frac{1}{2}I + (\Kcal_D^{-\alpha,0})^*\right)$ is generated by $\psi_1^\alpha$ and $\psi_2^\alpha$, which are defined by (\ref{psi_def}), 
 we may write $\phi^\alpha$ as
\begin{equation*}
\phi^\alpha= A \psi_1^\alpha + B\psi_2^\alpha + \O(\delta),\label{phi_approx}
\end{equation*}
uniformly in $\alpha$, where $|A|+|B|=1$.

By integrating \eqnref{int_eq_redc} on $\partial D_1$ and $\partial D_2$, and using the following identity \cite{surv235}
\begin{equation*} \label{eq:K1int}
\int_{\p D_j} \Kcal^{\alpha}_{D,1} [\phi] \dx \sigma  = -\int_{D_j}\Scal_D^{\alpha,0} [\phi] \dx x,\qquad j = 1,2,
\end{equation*}
it follows that
\begin{align*}
-\frac{(\omega^\alpha)^2 |D_1|}{v_b^2} A +\delta (A c_{1}^\alpha+B c_{2}^\alpha)= \O(\delta^2),\\
-\frac{(\omega^\alpha)^2 |D_2|}{v_b^2} B +\delta (A \overline{c_{2}^\alpha}+B c_{1}^\alpha)= \O(\delta^2),
\end{align*}
uniformly for $\alpha$ in a neighbourhood of $\alpha^*$. Observe that $|D_1| = |D_2|$. Thus, since we have from \eqref{c1c2_deri} that
$$
c_1^{\alpha^*+\epsilon\tilde\alpha} = c_1^{\alpha^*} + \O(\epsilon^2)
,\quad
c_2^{\alpha^*+\epsilon\tilde\alpha} = \epsilon c(\tilde\alpha_1 - \iu\tilde\alpha_2) + \O(\epsilon^2),
$$
we obtain
\begin{align*}
-\frac{|D_1|}{v_b^2} \left((\omega^\alpha)^2 - (\omega^*)^2 \right)A + {c}\delta\epsilon \Big( \tilde\alpha_1 - \iu \tilde\alpha_2\Big) B= \O(\delta\epsilon^2 + \delta^2),\\
-\frac{|D_1|}{v_b^2} \left((\omega^\alpha)^2 - (\omega^*)^2 \right)B + \overline{c}\delta\epsilon \Big( \tilde\alpha_1 + \iu \tilde\alpha_2\Big) A= \O(\delta\epsilon^2 + \delta^2).
\end{align*}
From \eqnref{eq:dirac} we know that  
$$\omega^\alpha + \omega^* = 2\omega^* + \O(\epsilon\sqrt{\delta}), \quad \omega^\alpha - \omega^* = \O(\epsilon\sqrt{\delta}),$$
and from \eqnref{eq:wasymp} that 
$$\omega^*= \sqrt{\frac{\delta c_1^{\alpha^*} v_b^2}{|D_1|}} + \O(\delta),$$
so we arrive at
\begin{align*}
-2\sqrt{\frac{\delta c_1^{\alpha^*} |D_1|}{v_b^2}} (\omega^\alpha-\omega^*)A + {c}\delta\epsilon \Big( \tilde\alpha_1 - \iu \tilde\alpha_2\Big) B = \O(\delta\epsilon^2 + \delta^2),\\
-2\sqrt{\frac{\delta c_1^{\alpha^*} |D_1|}{v_b^2}} (\omega^\alpha-\omega^*)B + \overline{c}\delta\epsilon \Big( \tilde\alpha_1 + \iu \tilde\alpha_2\Big) A = \O(\delta\epsilon^2 + \delta^2).
\end{align*}
In matrix form, this reads
\begin{align}
\lambda_0 \sqrt{\delta}
\begin{bmatrix}
 0 & \epsilon c( \tilde{\alpha}_1 - \iu \tilde{\alpha}_2)
 \\
 \epsilon \overline{c}( \tilde{\alpha}_1 + \iu \tilde{\alpha}_2) & 0
\end{bmatrix}
\begin{bmatrix}
A
\\
B
\end{bmatrix}
= (\omega^\alpha-\omega^*)
\begin{bmatrix}
A
\\
B
\end{bmatrix} + \O(\delta^{1/2} \epsilon^2 + \delta^{3/2}),
\label{pre_Dirac_eq}
\end{align}
where, as in Theorem \ref{thm:honeycomb},
$$
\lambda_0 =  \frac{1}{2}\sqrt{\frac{  v_b^2}{c_1^{\alpha^*} |D_1|}}.
$$
Then, by solving the above eigenvalue problem, we obtain two (approximate) eigenpairs given by
$$
\omega^\alpha_\pm  = \omega^* \pm \sqrt{\delta} \lambda_0 \epsilon|c|\cdot |\tilde{\alpha}| + \O(\delta^{1/2} \epsilon^2 + \delta^{3/2}),
$$
and
\begin{equation} \label{abpm}
\begin{bmatrix}
A_\pm
\\
B_\pm 
\end{bmatrix}
=
\frac{1}{\sqrt{2}} \begin{bmatrix}
\ds\pm \frac{c}{|c|} \frac{\tilde{\alpha}_1 - \iu \tilde{\alpha}_2}{|\tilde{\alpha}|}
\\[1.em]
1
\end{bmatrix} + \O(\delta + \epsilon^2).
\end{equation}

This implies that the Bloch eigenfunction  $u_+^\alpha$ (respectively, $u_-^\alpha$) associated to the upper part $\omega_+^\alpha$ (respectively, the lower part $\omega_-^\alpha$) of the Dirac cone can be represented, for $x$ outside $D$, as
$$
u_\pm^{\alpha} = A_\pm \mathcal{S}_D^{\alpha,\omega_\pm^\alpha/v} [\psi_1^\alpha] + B_\pm \mathcal{S}_D^{\alpha,\omega_\pm^\alpha/v} [\psi_2^\alpha] + \O(\delta),
$$
uniformly for $\alpha$ in a neighbourhood of $\alpha^*$. Then, by Lemma \ref{lem:S_asymp}, equation \eqnref{eq:expS} and the fact that $\omega_\pm^\alpha= \O(\delta^{1/2})$ uniformly for $\alpha$ in a neighbourhood of $\alpha^*$, we have
\begin{equation}\label{eq:Bloch_eigen_asymp}
u^{\alpha^* + \epsilon \tilde\alpha}_\pm(x) =  A_\pm e^{\iu\epsilon\tilde\alpha \cdot x} \mathcal{S}_D^{\alpha^*,0} [\xi_1^{\epsilon\tilde{\alpha}}](x) + B_\pm e^{\iu\epsilon\tilde\alpha \cdot x} \mathcal{S}_D^{\alpha^*,0} [\xi_2^{\epsilon\tilde{\alpha}}](x) + \O(\delta + \epsilon^2).
\end{equation}

\section{Homogenization of the Bloch eigenfunctions near the Dirac points} \label{sec:hom}

Here, we consider the rescaled bubbly honeycomb crystal by replacing the lattice constant $L$ with $sL$ where $s>0$ is a small positive parameter. We then derive a homogenized equation.

We need the following lemma which can be proved by a scaling argument.

\begin{lemma}\label{lem:scale}
Let $\omega_j^\alpha,j=1,2$,  be the first two eigenvalues  and $u_j^\alpha$ be the associated Bloch eigenfunctions for the bubbly honeycomb crystal with lattice constant $L$. Then, the bubbly honeycomb crystal with lattice constant $sL$ has the first two Bloch eigenvalues
$$
\omega_{\pm,s}^{\alpha/s} = \frac{1}{s} \omega_\pm^\alpha,
$$
and the corresponding eigenfunctions are 
$$
u^{\alpha/s}_{\pm,s}(x) = u_\pm^\alpha\left( \frac{x}{s}\right).
$$
\end{lemma}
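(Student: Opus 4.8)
The plan is to prove the statement directly from the defining $\alpha$-quasi-periodic Helmholtz problem \eqnref{HP1} via the change of variables $x\mapsto x/s$, checking that each of its five conditions transforms correctly. I set $U(x):=u_\pm^\alpha(x/s)$ and verify that $U$ solves the corresponding problem for the crystal with lattice constant $sL$. The rescaled lattice is $s\Lambda$, generated by $sl_1,sl_2$, with fundamental domain $sY$ and bubbles $sD_j$; crucially, the material parameters $\rho,\rho_b,\kappa,\kappa_b$ (and hence $v,v_b,\delta$) are intrinsic and do \emph{not} scale, while the geometric data do.

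First, since $\nabla U(x)=\frac1s(\nabla u_\pm^\alpha)(x/s)$ and the coefficients are piecewise constant, the bulk equation becomes $\frac{1}{s^2}\big[\nabla\cdot\frac1\rho\nabla u_\pm^\alpha + s^2\frac{\Omega^2}{\kappa}u_\pm^\alpha\big](x/s)$ on $sY\setminus sD$, and analogously on $sD$. Comparing with \eqnref{HP1} at frequency $\omega_\pm^\alpha$, these vanish precisely when $s^2\Omega^2=(\omega_\pm^\alpha)^2$, i.e. $\Omega=\omega_\pm^\alpha/s$, which identifies the rescaled eigenfrequency. Next, continuity of $U$ across $\partial(sD)$ is inherited from that of $u_\pm^\alpha$; and since the dilation preserves the unit normal while the gradient gains a factor $1/s$, the flux-jump condition for $U$ is $\frac1s$ times that for $u_\pm^\alpha$ and hence still vanishes. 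Finally, for $l'\in s\Lambda$ we have $l'/s\in\Lambda$, so
$$U(x+l') = u_\pm^\alpha(x/s + l'/s) = e^{i\alpha\cdot(l'/s)}u_\pm^\alpha(x/s) = e^{i(\alpha/s)\cdot l'}U(x),$$
which is exactly $(\alpha/s)$-quasi-periodicity on $s\Lambda$. Thus $U$ is a Bloch eigenfunction of the $sL$-crystal at quasi-momentum $\alpha/s$ with eigenfrequency $\omega_\pm^\alpha/s$.

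It then remains to note that this correspondence is a bijection of the full spectra: the map $u\mapsto u(\cdot/s)$ together with $\omega\mapsto\omega/s$, $\alpha\mapsto\alpha/s$ is invertible (replace $s$ by $1/s$), and it rescales every eigenfrequency by the common factor $1/s$, hence preserves the ordering. In particular the first two eigenfrequencies of the $sL$-crystal are the $\frac1s$-multiples of $\omega_\pm^\alpha$, with eigenfunctions $u_\pm^\alpha(\cdot/s)$, which is the claim.

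I expect no genuine obstacle here; the only point requiring care is the bookkeeping of which objects are geometric (lattice vectors, fundamental domain, bubbles, dual lattice $\Lambda^*\mapsto\frac1s\Lambda^*$, quasi-momentum $\alpha\mapsto\alpha/s$, and wavenumbers $k=\omega/v$, $k_b=\omega/v_b$, which become $k/s$, $k_b/s$) versus intrinsic (the constants $\rho,\kappa,\rho_b,\kappa_b,v,v_b,\delta$, unchanged). If one instead prefers to argue through the layer-potential formulation, the identical bookkeeping yields the clean identity $G^{\alpha,k}(x/s)$ for the rescaled quasi-periodic Green's function at quasi-momentum $\alpha/s$ and wavenumber $k/s$ (using $|sY|=s^2|Y|$ in \eqnref{eq:green}), whence the corresponding scaling of $\Scal_D^{\alpha,k}$ and of $\Acal_\delta^{\alpha,\omega}$ in \eqnref{eq:A} follows and the result drops out of \eqnref{int_eq123}; but the direct PDE argument above is the shortest route.
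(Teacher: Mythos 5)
Your proof is correct and is exactly the argument the paper has in mind: the paper states this lemma without a written proof, remarking only that it ``can be proved by a scaling argument,'' and your change of variables $U(x)=u_\pm^\alpha(x/s)$, checked condition by condition against \eqnref{HP1} (bulk equations forcing $\Omega=\omega_\pm^\alpha/s$, continuity, the $1/s$-scaled flux jump, and $(\alpha/s)$-quasi-periodicity on $s\Lambda$), supplies precisely that argument. Your closing observation that the correspondence is an order-preserving bijection of the spectra, so that the \emph{first two} eigenvalues map to the first two, is the one bookkeeping point the paper's one-line remark leaves implicit, and you handle it correctly.
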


We see from the above lemma that the Dirac cone is located at the point $\alpha^*/s$. 
We denote the Dirac frequency by
$$
\omega_s^* = \frac{1}{s}\omega^*.
$$
In the sequel, in order to simplify the presentation, we assume the following:
\begin{asump} \label{assump:v}
	The wave speed inside the bubble is equal to the one outside, \ie,
	$$
	v = v_b=1.
	$$
\end{asump}
Then the wave numbers $k$ and $k_b$ become
$$
k=\omega^\alpha, \quad k_b=\omega^\alpha.
$$
We have the following result for the Bloch eigenfunctions $u_{j,s}^{\alpha/s},j=1,2,$ for $\alpha/s$ near the Dirac points $\alpha^*/s$.

\begin{lemma}\label{lem:Bloch_scale}
We have
$$
u_{\pm,s}^{\alpha^*/s+\tilde{\alpha}}(x) = A_\pm e^{\iu\tilde\alpha \cdot x} S_{1} \left(\frac{x}{s}\right) + B_\pm e^{\iu\tilde\alpha \cdot x} S_{2} \left(\frac{x}{s}\right) + \O(\delta + s),
$$
where
$$
S_j (x) = \mathcal{S}_D^{\alpha^*,0} [\psi_j^{\alpha^*}](x), \quad j=1,2.
$$
\end{lemma}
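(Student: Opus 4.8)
The plan is to combine the two scaling results already available with the near-Dirac asymptotics of the eigenfunctions, and then to take the limit carefully. The starting point is equation \eqnref{eq:Bloch_eigen_asymp}, which expresses the unscaled Bloch eigenfunction $u_\pm^{\alpha^*+\epsilon\tilde\alpha}(x)$ in terms of $e^{i\epsilon\tilde\alpha\cdot x}\Scal_D^{\alpha^*,0}[\xi_j^{\epsilon\tilde\alpha}](x)$. First I would apply Lemma \ref{lem:scale} with $s$ as the rescaling parameter: the eigenfunctions of the crystal with lattice constant $sL$ are $u_{\pm,s}^{\alpha/s}(x) = u_\pm^\alpha(x/s)$. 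The idea is to match the two parameters, taking $\epsilon = s$ so that evaluating $u_\pm^{\alpha^*+s\tilde\alpha}$ at the point $x/s$ converts the slowly-varying plane-wave factor $e^{i\epsilon\tilde\alpha\cdot x}=e^{is\tilde\alpha\cdot(x/s)}=e^{i\tilde\alpha\cdot x}$ into an $\O(1)$ oscillation on the macroscopic $x$-scale, while $\Scal_D^{\alpha^*,0}[\xi_j^{s\tilde\alpha}](x/s)$ becomes a periodic, rapidly-oscillating microscale function of $x/s$.

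Concretely, I would substitute $\alpha = \alpha^* + s\tilde\alpha$ into \eqnref{eq:Bloch_eigen_asymp}, evaluate at $x/s$, and read off
$$
u_{\pm,s}^{\alpha^*/s+\tilde\alpha}(x) = A_\pm e^{i\tilde\alpha\cdot x}\Scal_D^{\alpha^*,0}[\xi_1^{s\tilde\alpha}]\!\left(\tfrac{x}{s}\right) + B_\pm e^{i\tilde\alpha\cdot x}\Scal_D^{\alpha^*,0}[\xi_2^{s\tilde\alpha}]\!\left(\tfrac{x}{s}\right) + \O(\delta + s^2),
$$
where I have used that the rescaled quasimomentum is $(\alpha^*+s\tilde\alpha)/s = \alpha^*/s + \tilde\alpha$ and that the error term $\O(\delta+\epsilon^2)$ in \eqnref{eq:Bloch_eigen_asymp} becomes $\O(\delta+s^2)$ under $\epsilon=s$. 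The remaining task is to replace $\xi_j^{s\tilde\alpha}$ by $\psi_j^{\alpha^*}$, so that the microscale density becomes $S_j(x/s) = \Scal_D^{\alpha^*,0}[\psi_j^{\alpha^*}](x/s)$ as claimed. This replacement is exactly the content of Remark \ref{rmk:xi}, which gives $\xi_j^{s\tilde\alpha} = \psi_j^{\alpha^*} + \O(s)$ in $L^2(\partial D)$; by continuity of the (fixed) operator $\Scal_D^{\alpha^*,0}\colon L^2(\partial D)\to H^1(\partial D)$ this propagates to $\Scal_D^{\alpha^*,0}[\xi_j^{s\tilde\alpha}] = S_j + \O(s)$ pointwise, and the resulting $\O(s)$ correction is absorbed into the stated error $\O(\delta+s)$.

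The step I expect to require the most care is precisely this last absorption, because the microscale argument is $x/s$ rather than $x$, so one must be sure the error is uniform and that evaluating a quantity that is $\O(s)$ in the appropriate norm at the point $x/s$ still contributes only $\O(s)$ and not something that blows up as $s\to 0$. The key point is that $\Scal_D^{\alpha^*,0}[\cdot]$ is a \emph{fixed}, $s$-independent operator and $S_j$ together with the correction $\Scal_D^{\alpha^*,0}[\xi_j^{s\tilde\alpha}-\psi_j^{\alpha^*}]$ are genuine periodic functions of their argument, bounded uniformly on $\R^2$; hence evaluating at $x/s$ does not amplify the $\O(s)$ bound. I would also note that the plane-wave prefactors $A_\pm, B_\pm$ from \eqnref{abpm} carry their own $\O(\delta+\epsilon^2)=\O(\delta+s^2)$ error, which is again subsumed in $\O(\delta+s)$. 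Finally, since the two displayed error terms $\O(\delta+s^2)$ and $\O(s)$ combine to $\O(\delta+s)$, collecting everything yields the stated formula, completing the proof.
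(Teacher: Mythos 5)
Your proposal is correct and follows the paper's proof essentially step for step: rescale via Lemma \ref{lem:scale}, substitute $\epsilon = s$ into \eqnref{eq:Bloch_eigen_asymp} evaluated at $x/s$, then replace $\xi_j^{s\tilde\alpha}$ by $\psi_j^{\alpha^*}$ using \eqnref{eq:xi} and absorb the resulting $\O(s)$ correction into the error term. The only detail the paper records that you omit is that \eqnref{eq:Bloch_eigen_asymp} was derived for $x$ outside $D$, so Assumption \ref{assump:v} ($v = v_b$, hence $k = k_b$) is invoked to make the same representation valid inside $D$ as well; your extra remarks on uniformity of the error under evaluation at $x/s$ go beyond what the paper states but are consistent with it.
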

\begin{proof}
We know that $u_{\pm,s}^{\alpha^*/s+\tilde{\alpha}}(x) = u_\pm^{\alpha^* + s \tilde{\alpha}}(x/s)$, which together with  \eqref{eq:Bloch_eigen_asymp} gives 
$$u_{\pm,s}^{\alpha^*/s+\tilde{\alpha}}(x) =  A_\pm e^{\iu\tilde\alpha \cdot x} \mathcal{S}_D^{\alpha^*,0} \big[\xi_1^{s\tilde{\alpha}}\big]\left(\frac{x}{s}\right) + B_\pm e^{\iu\tilde\alpha \cdot x} \mathcal{S}_D^{\alpha^*,0} \big[\xi_2^{s\tilde{\alpha}}\big]\left(\frac{x}{s}\right) + \O(\delta + s^2).$$
By Assumption \ref{assump:v}, this is valid both outside and inside $D$. Finally, from \eqnref{eq:xi} we have
$$
\mathcal{S}_D^{\alpha^*,0} \big[\xi_j^{s\tilde{\alpha}}\big](x) = S_j (x) + \O(s), \quad j=1,2,
$$
from which the conclusion follows.
\end{proof}

We see that the functions $S_1$ and $S_2$ describe the microscopic behaviour of the Bloch eigenfunction $u_{\pm,s}^{\alpha^*/s+\tilde{\alpha}}$ while $A_\pm e^{\iu\tilde\alpha\cdot x}$ and  $B_\pm e^{\iu\tilde\alpha\cdot x}$ describe the macroscopic behaviour.
Now, we derive a homogenized equation near the Dirac frequency $\omega_s^*$.

Recall that the Dirac frequency of the unscaled honeycomb crystal satisfies $\omega^* = \O(\sqrt\delta) $.
As in \cite{homogenization}, in order to make the order of $\omega_s^*$ fixed when $s$ tends to zero, we assume that
\begin{asump}
$\delta = \mu s^2$
for some fixed $\mu>0$. 
\end{asump}
Then we have
$$
\omega_s^* = \frac{1}{s} \omega^*= \O(1) \quad \mbox{as } s\rightarrow 0.
$$
So, in what follows, we omit the subscript $s$ in $\omega_s^*$, namely, $\omega^*:=\omega_s^*$.
Suppose the frequency $\omega$ is close to $\omega^*$, \ie{},
$$
\omega-\omega^* = \beta \sqrt\delta \quad \mbox{for some constant } \beta.
$$
We need to find the Bloch eigenfunctions or $\tilde{\alpha}$  such that
$$
\omega = \omega_{\pm,s}^{\alpha^*/s + \tilde\alpha}.
$$ 
We have from \eqref{pre_Dirac_eq} and Lemmas \ref{lem:scale} and \ref{lem:Bloch_scale} that the corresponding $\tilde{\alpha}$ satisfies
\begin{align*}
\lambda_0
\begin{bmatrix}
 0 &  c( \tilde{\alpha}_1 - \iu \tilde{\alpha}_2)
 \\
 \overline{c}( \tilde{\alpha}_1 + \iu \tilde{\alpha}_2) & 0
\end{bmatrix}
\begin{bmatrix}
A_\pm
\\
B_\pm
\end{bmatrix}
= \beta
\begin{bmatrix}
A_\pm
\\
B_\pm 
\end{bmatrix} + \O(s).
\end{align*}
So, it is immediate to see that the macroscopic field $
 [\tilde{u}_{1}, \tilde{u}_{2}]^T:=[A_\pm e^{\iu\tilde{\alpha}\cdot x}, B_\pm e^{\iu\tilde{\alpha}\cdot x}]^T$ satisfies the system of Dirac equations as follows:
$$
\lambda_0
\begin{bmatrix}
 0 & (-c\iu)( \p_1 - \iu \p_2)
 \\
 (-\overline{c}\iu)( \p_1 + \iu \p_2) & 0
\end{bmatrix}
\begin{bmatrix}
\tilde{u}_{1}
\\
\tilde{u}_{2}
\end{bmatrix}
= \beta
\begin{bmatrix}
\tilde{u}_{1}
\\
\tilde{u}_{2}
\end{bmatrix}.
$$ 
Here, the superscript $T$ denotes the transpose and $\partial_i$ is the partial derivative with respect to the $i$th variable. 
Note that the each component $\tilde{u}_j,j=1,2$, of the macroscopic field satisfies the Helmholtz equation
\begin{equation} \label{eq:hom}
\Delta \tilde{u}_j + \frac{\beta^2}{|c|^2\lambda_0^2} \tilde{u}_j = 0.
\end{equation}

The following is the main result on the homogenization theory for the honeycomb bubbly crystals.

\begin{theorem} \label{thm:main}
For frequencies $\omega$ close to the Dirac frequency $\omega^*$, namely, $\omega-\omega^* = \beta \sqrt\delta$, the following asymptotic behaviour of the Bloch eigenfunction $u^{\alpha^*/s + \tilde{\alpha}}_s$ holds:
$$
u_{s}^{\alpha^*/s+\tilde{\alpha}}(x) = A e^{\iu\tilde\alpha \cdot x} S_{1} \left(\frac{x}{s}\right) + B e^{\iu\tilde\alpha \cdot x} S_{2} \left(\frac{x}{s}\right) + \O(s),
$$  
where the macroscopic field $[\tilde{u}_{1}, \tilde{u}_{2}]^T:=[A e^{\iu\tilde{\alpha}\cdot x}, B e^{\iu\tilde{\alpha}\cdot x}]^T$ satisfies the two-dimensional Dirac equation
$$
\lambda_0
\begin{bmatrix}
 0 & (-c\iu)( \p_1 - \iu \p_2)
 \\
 (-\overline{c}\iu)( \p_1 + \iu \p_2) & 0
\end{bmatrix}
\begin{bmatrix}
\tilde{u}_{1}
\\
\tilde{u}_{2}
\end{bmatrix}
= \frac{\omega-\omega^*}{\sqrt\delta}
\begin{bmatrix}
\tilde{u}_{1}
\\
\tilde{u}_{2}
\end{bmatrix},
$$ 
which can be considered as a homogenized equation for the honeycomb bubbly crystal while the microscopic fields $S_1$ and $S_2$ vary on the scale of $s$.
\end{theorem}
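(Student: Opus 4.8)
The plan is to read off Theorem~\ref{thm:main} as the synthesis of the two-scale representation in Lemma~\ref{lem:Bloch_scale} and the reduced algebraic eigenvalue problem \eqnref{pre_Dirac_eq}: the microscopic profiles $S_1,S_2$ and the plane-wave envelopes $A_\pm e^{i\tilde\alpha\cdot x}$, $B_\pm e^{i\tilde\alpha\cdot x}$ have already been isolated in the preceding analysis, so the only genuine tasks are to collapse the remainder under the scaling hypothesis and to recognise the macroscopic amplitudes as solutions of a \emph{differential} Dirac equation rather than of a purely algebraic relation.

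First I would invoke Lemma~\ref{lem:Bloch_scale}, which already gives
$$
u_{\pm,s}^{\alpha^*/s+\tilde{\alpha}}(x) = A_\pm e^{i\tilde\alpha \cdot x} S_{1}\left(\frac{x}{s}\right) + B_\pm e^{i\tilde\alpha \cdot x} S_{2}\left(\frac{x}{s}\right) + \O(\delta + s).
$$
Under the standing assumption $\delta = \mu s^2$ one has $\delta = \O(s^2) = \O(s)$, so the remainder reduces to $\O(s)$ and the leading two-scale form is exactly the one claimed. The subscript $\pm$ is then removed by fixing the branch of the Dirac cone according to the sign of $\omega-\omega^*$: for $\omega>\omega^*$ one takes the $+$ branch and for $\omega<\omega^*$ the $-$ branch, writing $A:=A_\pm$ and $B:=B_\pm$ accordingly. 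This yields the displayed expansion for $u_s^{\alpha^*/s+\tilde\alpha}$.

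Second, for the macroscopic equation I would start from the reduced eigenvalue problem \eqnref{pre_Dirac_eq}. Rescaling via Lemma~\ref{lem:scale} (so that the cone sits at $\alpha^*/s$ and $\epsilon=s$), dividing through by the common prefactor, and substituting $\beta = (\omega-\omega^*)/\sqrt\delta$ turns \eqnref{pre_Dirac_eq} into the algebraic relation satisfied by $(A,B)$ with off-diagonal symbols $c(\tilde\alpha_1+i\tilde\alpha_2)$ and $\overline{c}(\tilde\alpha_1-i\tilde\alpha_2)$, up to $\O(s)$. The decisive observation is the algebraic-to-differential passage: since $\tilde u_1 = A e^{i\tilde\alpha\cdot x}$ and $\tilde u_2 = B e^{i\tilde\alpha\cdot x}$ are plane waves, the identity $\p_j e^{i\tilde\alpha\cdot x} = i\tilde\alpha_j e^{i\tilde\alpha\cdot x}$ gives
$$
-ci(\p_1+i\p_2)\tilde u_2 = c(\tilde\alpha_1+i\tilde\alpha_2)\tilde u_2, \qquad -\overline{c}i(\p_1-i\p_2)\tilde u_1 = \overline{c}(\tilde\alpha_1-i\tilde\alpha_2)\tilde u_1,
$$
so the chosen first-order operator is precisely the quantisation whose evaluation on the envelope reproduces the off-diagonal matrix of \eqnref{pre_Dirac_eq}. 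Hence $[\tilde u_1,\tilde u_2]^T$ solves the stated Dirac system exactly, with eigenvalue $\beta=(\omega-\omega^*)/\sqrt\delta$.

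The step I expect to demand the most care is not the formal assembly but the control of the remainder: one must check that the $\O$-terms inherited from Lemma~\ref{lem:S_asymp}, from the Neumann-series reduction of \eqnref{int_eq123}, and from \eqnref{abpm} remain \emph{uniform} as $\delta=\mu s^2\to0$, so that after the rescaling $x\mapsto x/s$ they genuinely collapse to a single $\O(s)$ rather than degrading. This relies on the error in Theorem~\ref{thm:honeycomb} being uniform in $\delta$, together with $\xi_j^{s\tilde\alpha}=\psi_j^{\alpha^*}+\O(s)$ from \eqnref{eq:xi}; confirming that the coupled scalings of $\delta$, of $\epsilon=s$, and of the spatial variable are mutually consistent is the one place where the bookkeeping could go wrong.
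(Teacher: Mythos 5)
Your proposal is correct and follows essentially the same route as the paper: it combines Lemma \ref{lem:Bloch_scale} with the scaling assumption $\delta=\mu s^2$ to collapse the remainder to $\O(s)$, rescales the algebraic eigenvalue problem \eqnref{pre_Dirac_eq} with $\epsilon=s$, and passes from the symbol $c(\tilde\alpha_1+i\tilde\alpha_2)$ to the first-order operator via $\p_j e^{i\tilde\alpha\cdot x}=i\tilde\alpha_j e^{i\tilde\alpha\cdot x}$, exactly as in Section \ref{sec:hom}. Your closing remark on the uniformity of the error terms in $\delta$ is consistent with the paper's observation that the $\O(|\alpha-\alpha^*|)$ term in Theorem \ref{thm:honeycomb} is uniform in $\delta$.
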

\begin{remark} \label{rmk:alpha2}
	Theorem \ref{thm:main} is valid around the Dirac point $\alpha^* = \alpha_1^*$.	Around the other Dirac point, analogous arguments show that Theorem \ref{thm:main} is valid with  $\alpha^*=\alpha_2^*$ and the macroscopic field now satisfying 
	$$
	\lambda_0
	\begin{bmatrix}
	0 & (-{c}\iu)( \p_1 + \iu \p_2)
	\\
	(-\overline{c}\iu)( \p_1 - \iu \p_2) & 0
	\end{bmatrix}
	\begin{bmatrix}
	\tilde{u}_{1}
	\\
	\tilde{u}_{2}
	\end{bmatrix}
	= \frac{\omega-\omega^*}{\sqrt\delta}
	\begin{bmatrix}
	\tilde{u}_{1}
	\\
	\tilde{u}_{2}
	\end{bmatrix},
	$$ 
	where $c, S_1$ and $S_2$ are now defined using $\alpha^*=\alpha_2^*$.
\end{remark}

\section{Numerical illustrations} \label{sec:num} 
In this section, we illustrate the main result of this paper, namely Theorem \ref{thm:main}, in the case of circular bubbles. We do this by numerically computing the eigenmodes close to the Dirac points for the honeycomb lattice. For comparison, in Section \ref{sec:num_square}, we also compute the eigenmodes in the case of a square lattice of bubbles. This will show the necessity of having a honeycomb lattice, instead of the simpler square lattice, in order to achieve near-zero index in the sub-wavelength regime. This also serves to illustrate and numerically verify the conclusions from \cite{homogenization}.

The eigenmodes are computed by discretising the operator $\Acal_\delta^{\alpha,\omega}$ from equation \eqnref{eq:A} using the multipole method as described in \cite{honeycomb,AFLYZ}. All computations are made for circular bubbles with radius $R= 0.2$. Moreover, the material parameters are  $\rho = \kappa = 1000$, $\rho_b = \kappa_b = 1$, which gives $\delta = 10^{-3}$, $v=1$, and $v_b=1$.

For simplicity, we will perform the computations with the scaling $s=1$. Observe that Theorem \ref{thm:main} is valid for small $s$, which will instead be achieved using the rescaling properties of the eigenmodes. From the proof of Lemma \ref{lem:Bloch_scale} we have $u_{\pm,s}^{\alpha^*/s+\tilde{\alpha}}(x) = u_\pm^{\alpha^* + s \tilde{\alpha}}(x/s)$. Therefore, a small scaling factor $s$ corresponds to choosing quasi-periodicities $\alpha$ sufficiently close to $\alpha^*$ and choosing a large range of $x$. This justifies the choice $s=1$, and allows us to study the limiting behaviour without changing the geometry of the differential equation.

\subsection{Honeycomb lattice} \label{sec:num_honeycomb}
Recall from \eqnref{eq:u} that the eigenmodes can be expressed as
\begin{equation} \label{eq:uagain}
u(\Bx) = \begin{cases}
\Scal_D^{\alpha, k_b} [\phi](\Bx),\quad \Bx\in D, \\
\Scal_D^{ \alpha,k}[\psi](\Bx),\quad \Bx\in Y\setminus\overline{D},
\end{cases}
\end{equation}
where $\mathcal{A}_\delta^{\alpha,\omega} \left(\begin{smallmatrix} \phi \\ \psi \end{smallmatrix}\right)=0.$ Then $(\phi,\psi)$ can be numerically computed as an eigenvector of the discretised operator $\Acal_\delta^{\alpha,\omega}$ corresponding to the eigenvalue $0$. Moreover, $u(x)$ can be computed by \eqnref{eq:uagain}, extended quasi-periodically to the whole space. We will consider the eigenmodes at frequencies $\omega$ with $\omega-\omega^* = \beta\sqrt{\delta}$.

First, we consider the small-scale behaviour of the eigenmodes. The small scale corresponds to $e^{\iu\tilde\alpha\cdot x} \approx 1$, so Theorem \ref{thm:main} shows that the eigenfunctions are given by 
$$u_{\pm}^{\alpha^*+\tilde{\alpha}}(x) = A  S_{1} (x) + B S_{2} (x) + \O(s).$$
Equation (\ref{abpm}) shows that $A = A_\pm = \pm\frac{1}{\sqrt{2}}e^{\iu(\theta_c+\theta)}$ and $B = B_\pm = \frac{1}{\sqrt{2}}$, where $\theta_c$ and $\theta$ are the arguments of $c$ and $\tilde{\alpha}$, respectively, and the sign coincides with the sign of $\beta$. To pick a unique eigenmode, we choose $\theta=0$, \ie{}, the quasi-periodicity $\tilde\alpha = \left( \begin{smallmatrix} \tilde\alpha_1 \\ 0 \end{smallmatrix}\right)$.

Figure \ref{fig:small-scale1} shows the function $$u^*(x) = A_-  S_{1} (x) + B_- S_{2} 
(x),$$ which is the first Dirac eigenmode in the limit $\beta \rightarrow 0^-$. It can be seen that the eigenmode is highly oscillating and oscillates between $-1$ and $1$ within one hexagon of bubbles. Moreover, this eigenmode has no large-scale oscillations. The second eigenmode, corresponding to $\beta \rightarrow 0^+$, has the same qualitative features.

Next, we consider the large-scale behaviour of the eigenmodes. Figure \ref{fig:efunc_h1} shows the real part of the first eigenmode $u_{-}^{\alpha^*+\tilde{\alpha}}$ for $\beta = 8\cdot10^{-3}$. It can be seen that the eigenmode is oscillating with a low frequency in the large scale. Moreover, it is clear that the eigenmode consists of two superimposed fields, corresponding to the parts $Ae^{\iu\tilde\alpha\cdot x}  S_{1} (x)$ and $B e^{\iu\tilde\alpha\cdot x}S_{2} (x)$. These fields are phase-shifted, due to the factor $e^{\iu(\theta_c + \theta)}$ in $A$.

To demonstrate the near-zero effective refractive index of the bubbly honeycomb crystal, we consider the large-scale spatial oscillation frequency close to the Dirac points. From (\ref{eq:hom}), we know that for frequencies $\omega = \omega^*+ \epsilon$ close to the Dirac frequency, each component $\tilde u_j, j=1,2,$ of the macroscopic field oscillates at a spatial frequency 
$$f = \frac{|\epsilon|}{|c|\lambda_0\sqrt{\delta}}.$$
To verify this relation, we compute the large-scale spatial frequency of the eigenmodes for $\epsilon$ in the range $\epsilon \in [-0.01,0.01]$, as shown in Figure \ref{fig:linear}. It can be seen that the relation is linear for $\epsilon$ close to $0$. This verifies \eqnref{eq:hom}, and shows that close to the Dirac frequency the macroscopic behaviour of the honeycomb crystal can be described as a near-zero refractive index material. However, we emphasize that the homogenization approach is valid  \emph{only} in the large scale; this will fail to capture the small-scale oscillations. Also, we emphasize the counter-intuitive result that despite not being located at $\Gamma$, the eigenmodes close to the Dirac point show close to zero phase change across the crystal. Indeed, this is a consequence of the near-zero property and the zero phase change of the microscopic field across a hexagon.

\begin{figure}[p]
	\centering
	\vspace{-40pt}
	\begin{subfigure}[b]{0.48\linewidth}
		\includegraphics[scale=0.42]{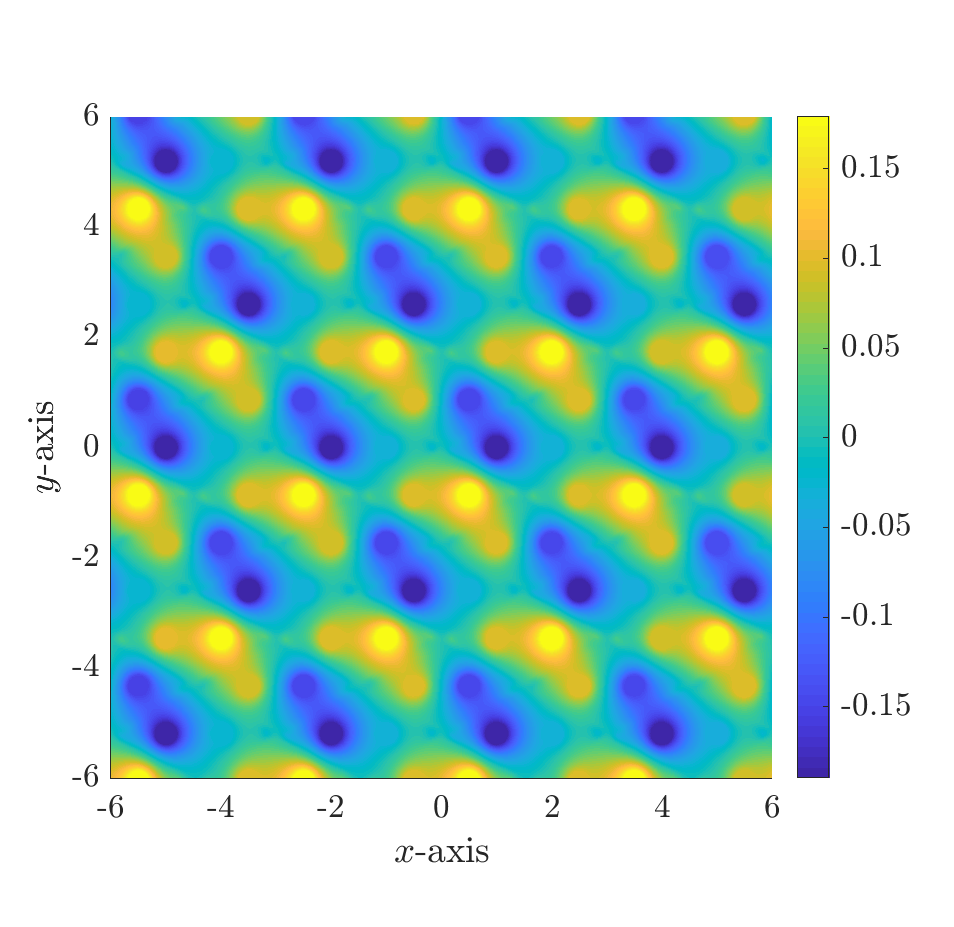}
		\vspace{-10pt}
		\caption{Real part of $u^*$.}
		\label{fig:Smr}
	\end{subfigure}
	\hspace{3pt}
	\begin{subfigure}[b]{0.48\linewidth}
		\includegraphics[scale=0.42]{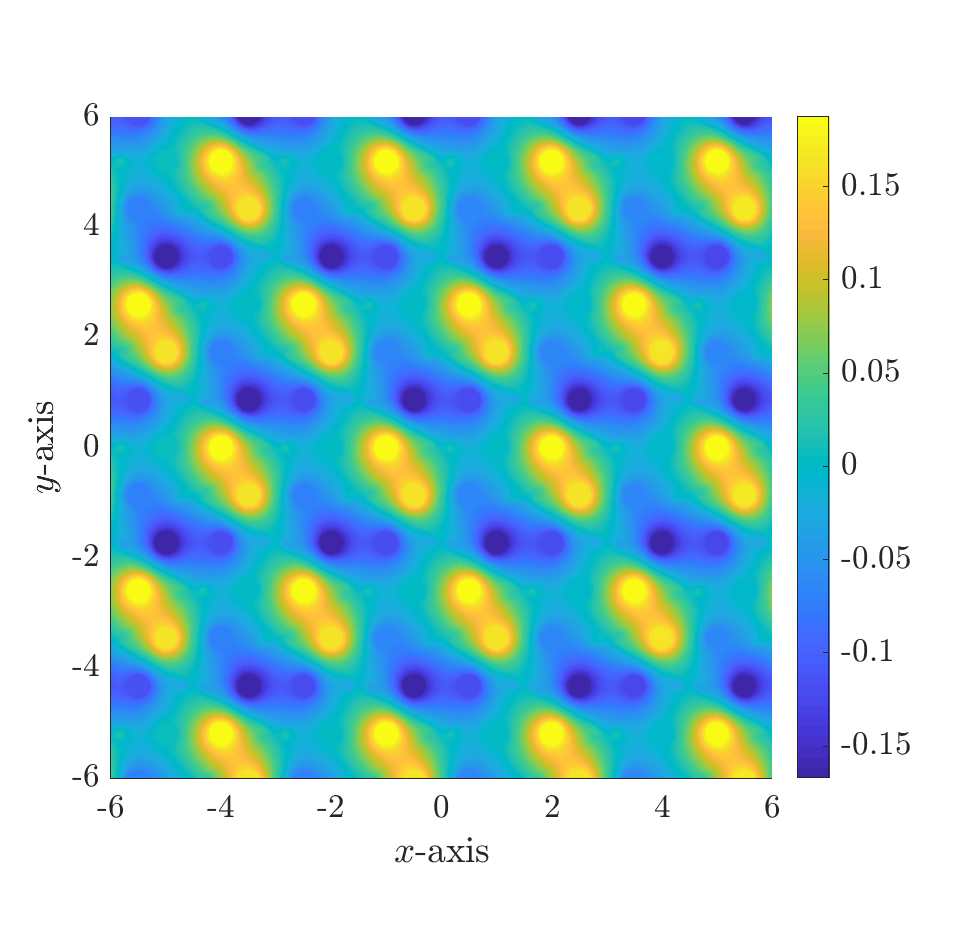}
		\vspace{-10pt}
		\caption{Imaginary part of $u^*$.}
		\label{fig:Smi}
	\end{subfigure}
	\caption{Small-scale behaviour of the first Bloch eigenfunction $u^*$.} \label{fig:small-scale1}
\end{figure}

\begin{figure}[p]
	\centering
	\vspace{-10pt}
	\begin{subfigure}[b]{0.43\linewidth}
		\hspace{-10pt}
		\includegraphics[scale=0.37]{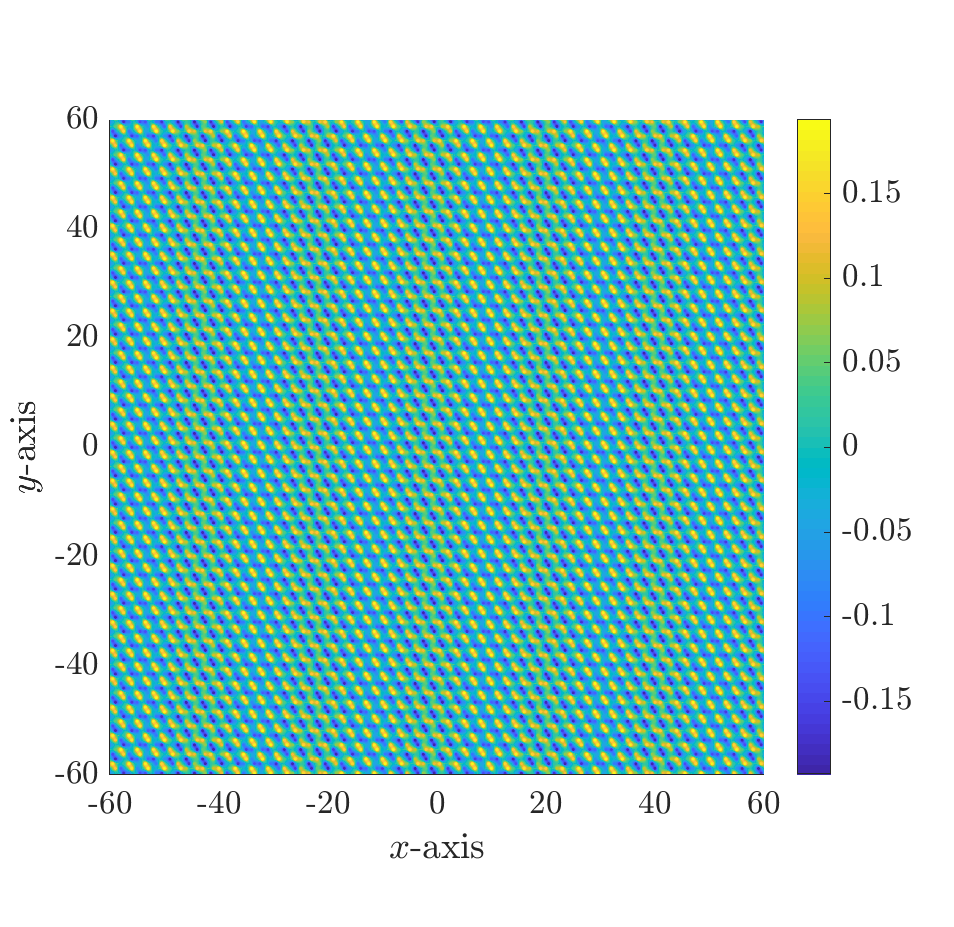}
		\vspace{-15pt}
		\caption{Two-dimensional plot.}
		\label{fig:h2D1}
	\end{subfigure}
	\hspace{3pt}
	\begin{subfigure}[b]{0.52\linewidth}
		\includegraphics[scale=0.43]{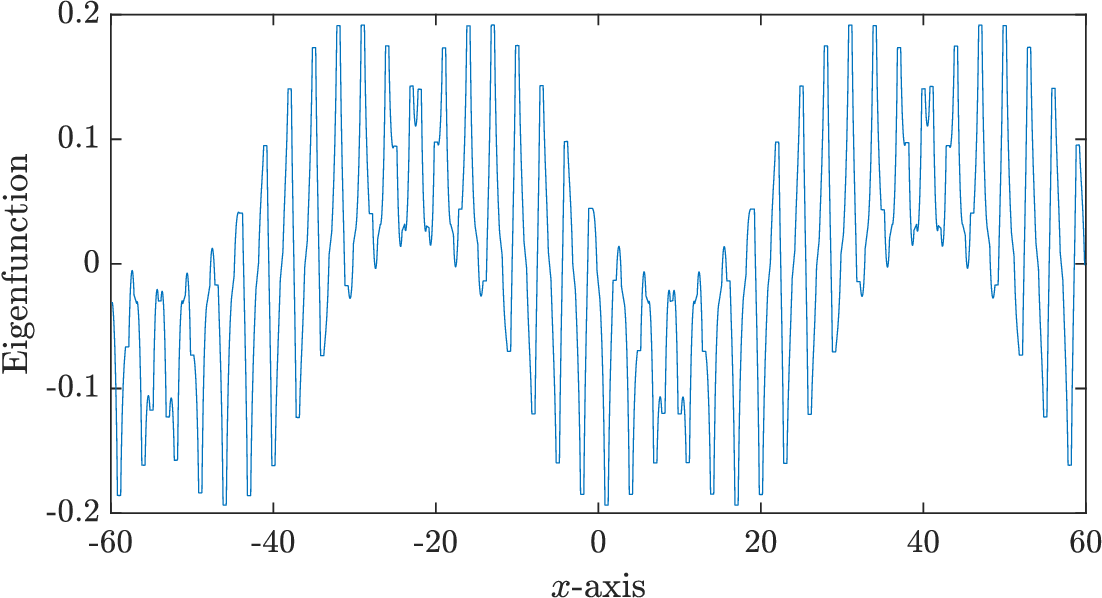}
		\caption{One-dimensional plot along the $x$-axis.}
		\label{fig:h1D1}
	\end{subfigure}
	\caption{Real part of first Bloch eigenfunction of the honeycomb lattice shown over many unit cells.} \label{fig:efunc_h1}
\end{figure}

\begin{figure}[p]
	\centering
	\includegraphics[scale=0.45]{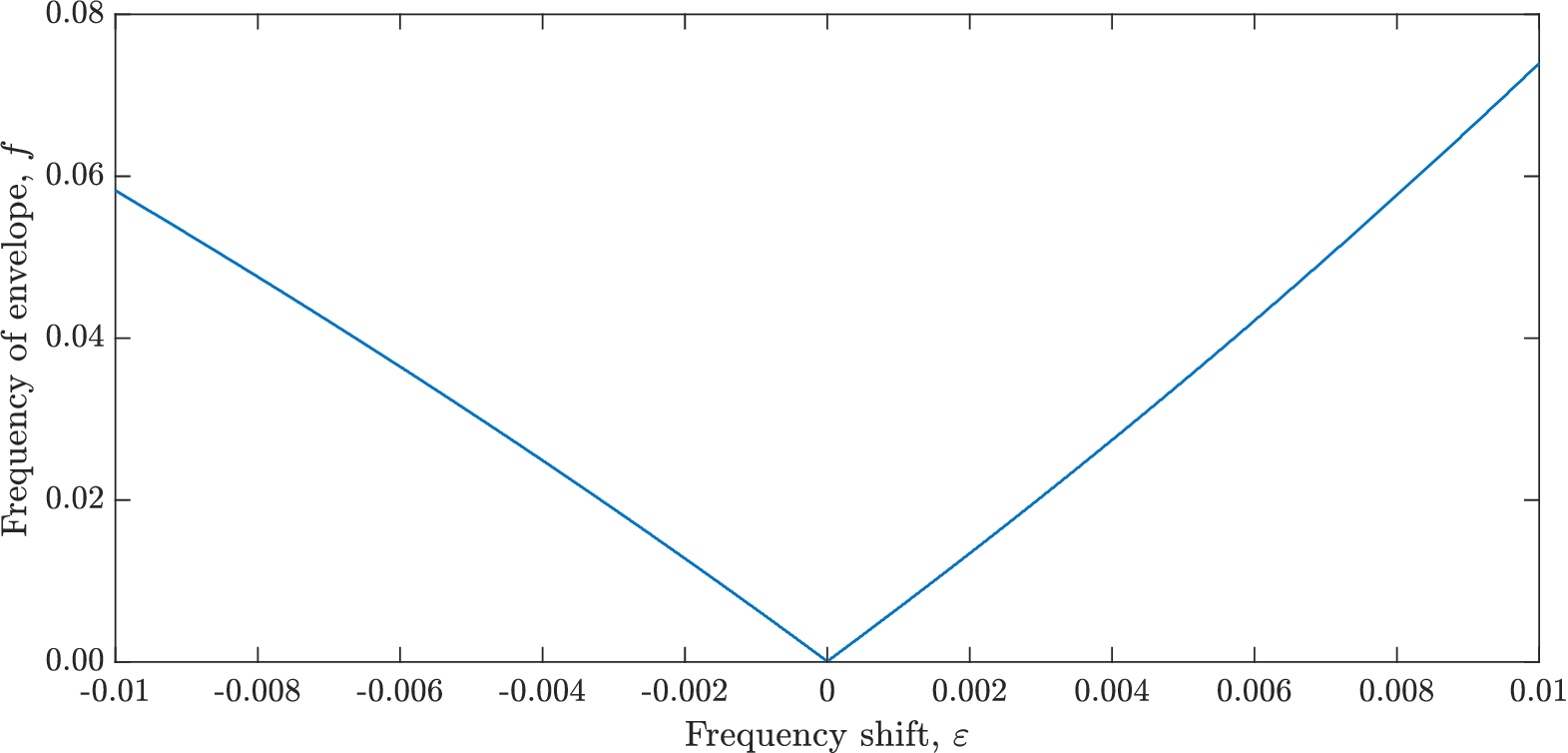}
	\caption{Spatial frequency of the macroscopic functions $\tilde u_j$ for different frequency shifts $\epsilon:=\beta\sqrt{\delta}$.}
	\label{fig:linear}
\end{figure}

\subsection{Square lattice} \label{sec:num_square}
In this section, we perform the same numerical experiments as in Section \ref{sec:num_honeycomb} but for the case of a square lattice of bubbles. We begin by recalling the main result from \cite{homogenization}. Consider now a square lattice with unit cell $Y$ as depicted in Figure \ref{fig:square_lattice}. The corresponding dispersion relation has a bandgap between the first and second bands, and the critical frequency $\omega^*$ of the first band is achieved at the symmetry point $\alpha^* = M = (\pi,\pi)$ in the Brillouin zone.

\begin{figure}[h]
	\centering
	\begin{tikzpicture}
	\begin{scope}[xshift=-4cm,scale=1]
	\coordinate (a) at (1,0);		
	\coordinate (b) at (0,1);	
	
	\draw (-0.5,-0.5) -- (0.5,-0.5) -- (0.5,0.5) -- (-0.5,0.5) -- cycle; 
	\draw (0,0) circle(6pt);
	\draw[opacity=0.2] (-0.5,0) -- (0,0)
	(0.5,0) -- (0,0)
	(0,-0.5) -- (0,0)
	(0,0.5) -- (0,0);
	
	\begin{scope}[shift = (a)]
	\draw (0,0) circle(6pt);
	\draw[opacity=0.2] (-0.5,0) -- (0,0)
	(0.5,0) -- (0,0)
	(0,-0.5) -- (0,0)
	(0,0.5) -- (0,0);
	\end{scope}
	\begin{scope}[shift = (b)]
	\draw (0,0) circle(6pt);
	\draw[opacity=0.2] (-0.5,0) -- (0,0)
	(0.5,0) -- (0,0)
	(0,-0.5) -- (0,0)
	(0,0.5) -- (0,0);
	\end{scope}
	\begin{scope}[shift = ($-1*(a)$)]
	\draw (0,0) circle(6pt);
	\draw[opacity=0.2] (-0.5,0) -- (0,0)
	(0.5,0) -- (0,0)
	(0,-0.5) -- (0,0)
	(0,0.5) -- (0,0);
	\end{scope}
	\begin{scope}[shift = ($-1*(b)$)]
	\draw (0,0) circle(6pt);
	\draw[opacity=0.2] (-0.5,0) -- (0,0)
	(0.5,0) -- (0,0)
	(0,-0.5) -- (0,0)
	(0,0.5) -- (0,0);
	\end{scope}
	\begin{scope}[shift = ($(a)+(b)$)]
	\draw (0,0) circle(6pt);
	\draw[opacity=0.2] (-0.5,0) -- (0,0)
	(0.5,0) -- (0,0)
	(0,-0.5) -- (0,0)
	(0,0.5) -- (0,0);
	\end{scope}
	\begin{scope}[shift = ($-1*(a)-(b)$)]
	\draw (0,0) circle(6pt);
	\draw[opacity=0.2] (-0.5,0) -- (0,0)
	(0.5,0) -- (0,0)
	(0,-0.5) -- (0,0)
	(0,0.5) -- (0,0);
	\end{scope}
	\begin{scope}[shift = ($(a)-(b)$)]
	\draw (0,0) circle(6pt);
	\draw[opacity=0.2] (-0.5,0) -- (0,0)
	(0.5,0) -- (0,0)
	(0,-0.5) -- (0,0)
	(0,0.5) -- (0,0);
	\end{scope}
	\begin{scope}[shift = ($-1*(a)+(b)$)]
	\draw (0,0) circle(6pt);
	\draw[opacity=0.2] (-0.5,0) -- (0,0)
	(0.5,0) -- (0,0)
	(0,-0.5) -- (0,0)
	(0,0.5) -- (0,0);
	\end{scope}
	\end{scope}
	
	\draw[dashed,opacity=0.5,->] (-3.9,0.65) .. controls(-2.9,1.8) .. (0.5,0.7);
	\begin{scope}[xshift=2cm,scale=2.8]	
	\coordinate (a) at (1,{1/sqrt(3)});		
	\coordinate (b) at (1,{-1/sqrt(3)});	
	\coordinate (Y) at (1.8,0.45);
	\coordinate (c) at (2,0);
	\coordinate (x1) at ({2/3},0);
	\coordinate (x0) at (1,0);
	\coordinate (x2) at ({4/3},0);

	\pgfmathsetmacro{\rb}{0.25pt}
	\pgfmathsetmacro{\rs}{0.2pt}\
	
	\draw[->] (-0.5,-0.5) -- (-0.5,0.5) node[left]{$l_2$}; 
	\draw[->] (-0.5,-0.5) -- (0.5,-0.5) node[below]{$l_1$}; 
	\draw (0.5,-0.5) -- (0.5,0.5) -- (-0.5,0.5);
	\draw (0,0) circle(6pt);
	\draw (0.3,0) node{$D$};
	
	\draw (0.5,0.5) node[right]{$Y$};
	\end{scope}
	\end{tikzpicture}
	\caption{Illustration of the square lattice crystal and quantities in the fundamental domain $Y$.} \label{fig:square_lattice}
\end{figure}
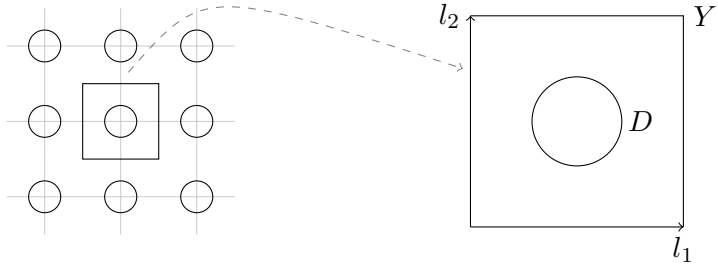

Let now $S_D^{\alpha,k}$ denote the single-layer potential for the square lattice, defined analogously as in Section \ref{sec:setup_G} but with $\Lambda$ and $\Lambda^*$ being the square lattice and dual square lattice, respectively. Define the function 
$$S(x) = \Scal^{\alpha,0}_D\left[\psi^{\alpha^*}\right](x), \quad x\in \R^2,$$ 
where $\psi^{\alpha} = \left(\Scal^{\alpha,0}_D\right)^{-1}\left[\chi_{\partial D}\right]$.

We now consider the rescaled crystal with unit cell $sY, \ s>0$. Again, we assume that the order of $\omega^*$ is fixed, \ie{},  $\delta = \mu s^2,$ for some $\mu>0$.
Then the eigenmodes of the rescaled square crystal are given in the following theorem, which is the analogue of Theorem \ref{thm:main} for the case of a square lattice.
\begin{theorem}[\cite{homogenization}]
	For frequencies $\omega$ close to the critical frequency $\omega^*$, namely, $\left(\omega^*\right)^2 - \omega^2  = \O(s^2)$, the following asymptotic behaviour of the Bloch eigenfunction $u^{\alpha^*/s + \tilde{\alpha}}_s$ holds:
	$$
	u_{s}^{\alpha^*/s+\tilde{\alpha}}(x) = e^{\iu\tilde\alpha \cdot x} S\left(\frac{x}{s}\right) + \O(s),
	$$  
	where the macroscopic field $\tilde{u}:= e^{\iu\tilde{\alpha}\cdot x}$ satisfies the Helmholtz equation
\beq \Delta \tilde{u} + \frac{(\omega^*)^2-\omega^2}{\delta\tilde\lambda^2} \tilde{u} = 0, \label{eq:hom_square} \eeq
	which can be considered as a homogenized equation for the square bubbly crystal, while the microscopic field $S$ varies on the scale of $s$.
\end{theorem}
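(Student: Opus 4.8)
The plan is to follow the same layer-potential strategy used for the honeycomb crystal in Sections \ref{sec:efunc}--\ref{sec:hom}, adapted to the square geometry, where the unit cell contains a single bubble $D$. Two simplifications are essential: the kernel of the leading-order operator is now one-dimensional (generated by a single density $\psi^{\alpha^*}$), so the envelope is a scalar rather than a two-component spinor; and the critical point $\alpha^*=M=(\pi,\pi)$ is a band \emph{maximum} at which the capacitance has vanishing gradient, so the dispersion is quadratic rather than conical. First I would record the square-lattice analogue of Lemma \ref{lem:S_asymp}: writing $\alpha=\alpha^*+\epsilon\tilde\alpha$ and Taylor-expanding the quasi-periodic Green's function \eqnref{eq:green} for the square lattice, one obtains
$$\Scal_D^{\alpha^*+\epsilon\tilde\alpha,k}\big[\psi^{\alpha^*+\epsilon\tilde\alpha}\big](x) = e^{i\epsilon\tilde\alpha\cdot x}\Scal_D^{\alpha^*,0}\big[\xi^{\epsilon\tilde\alpha}\big](x)+\O(\epsilon^2+\epsilon k^2),$$
with $\xi^{\epsilon\tilde\alpha}=(\Scal_D^{\alpha^*,0})^{-1}[e^{-i\epsilon\tilde\alpha\cdot y}\chi_{\partial D}]=\psi^{\alpha^*}+\O(\epsilon)$, exactly as in Remark \ref{rmk:xi}.

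Next I would reduce the integral equation \eqnref{int_eq12} in the high-contrast limit. Using $\omega^\alpha=\O(\sqrt\delta)$ and the expansions \eqnref{eq:expS}--\eqnref{eq:expK}, the system collapses (as in \eqnref{eq_line2}--\eqnref{int_eq_redc}) to $(-\tfrac12 I+(\Kcal_D^{-\alpha,0})^*+k_b^2\Kcal_{D,1}^\alpha)[\phi^\alpha]-\delta(\tfrac12 I+(\Kcal_D^{-\alpha,0})^*)[\phi^\alpha]=\O(\delta^2)$, whose leading kernel is spanned by the single density $\psi^\alpha$, so $\phi^\alpha=\psi^\alpha+\O(\delta)$. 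Integrating over $\partial D$ and invoking the identity $\int_{\partial D}\Kcal_{D,1}^\alpha[\phi]\,\dx\sigma=-\int_D\Scal_D^{\alpha,0}[\phi]\,\dx x$ then yields the scalar characteristic relation $(\omega^\alpha)^2=\delta C^\alpha/|D|+\O(\delta^{3/2})$, the square-lattice counterpart of \eqnref{eq:wasymp}, where $C^\alpha=-\int_{\partial D}\psi^\alpha\,\dx\sigma$ is the scalar capacitance.

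The crux of the argument is the behaviour of $C^\alpha$ near $M$, and this is where the square case departs from the honeycomb case \eqnref{c1c2_deri}. I would exploit the fourfold symmetry of the square lattice: since the $90^\circ$ rotation $R$ satisfies $RM\equiv M$ modulo the reciprocal lattice and leaves $C^\alpha$ invariant, the gradient $\nabla_\alpha C^\alpha|_{M}$ is fixed by $R^{T}$ and must therefore vanish, while the Hessian commutes with $R$ and is thus a scalar multiple of the identity. Consequently $C^{\alpha^*+\epsilon\tilde\alpha}=C^{\alpha^*}-\tfrac12 c_0\,\epsilon^2|\tilde\alpha|^2+\O(\epsilon^3)$ for a constant $c_0>0$ (positive since $M$ is the band maximum), which, combined with the scalar relation above, gives the \emph{isotropic} quadratic dispersion $(\omega^*)^2-(\omega^{\alpha^*+\epsilon\tilde\alpha})^2=\delta\tilde\lambda^2\epsilon^2|\tilde\alpha|^2+\O(\delta\epsilon^3)$, where $\tilde\lambda$ is the constant of \eqnref{eq:hom_square}, given by $\tilde\lambda^2=c_0/(2|D|)$. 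Establishing this isotropy is the main obstacle, since it requires tracking the action of the lattice symmetries on $\Scal_D^{\alpha,0}$ and $\psi^\alpha$; everything downstream is then forced by the shape of the quadratic form, and in particular the absence of a linear term is exactly what produces a second-order Helmholtz equation rather than a first-order Dirac system.

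Finally I would transfer these unscaled statements to the rescaled crystal. The scaling lemma (the square analogue of Lemma \ref{lem:scale}) gives $u_s^{\alpha^*/s+\tilde\alpha}(x)=u^{\alpha^*+s\tilde\alpha}(x/s)$ and $\omega_s^{\alpha^*/s+\tilde\alpha}=\tfrac1s\omega^{\alpha^*+s\tilde\alpha}$. Taking $\epsilon=s$ in the eigenfunction expansion and using $\xi^{s\tilde\alpha}=\psi^{\alpha^*}+\O(s)$, so that $\Scal_D^{\alpha^*,0}[\xi^{s\tilde\alpha}](x)=S(x)+\O(s)$, produces $u_s^{\alpha^*/s+\tilde\alpha}(x)=e^{i\tilde\alpha\cdot x}S(x/s)+\O(\delta+s)$; with $\delta=\mu s^2=\O(s^2)$ this is the claimed $\O(s)$ expansion, with $S$ varying on the fast scale $x/s$ and $\tilde u=e^{i\tilde\alpha\cdot x}$ the slow envelope. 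Applying the same scaling to the quadratic dispersion turns it into $(\omega^*)^2-\omega^2=\delta\tilde\lambda^2|\tilde\alpha|^2$ for the scaled frequencies, and since $\Delta\tilde u=-|\tilde\alpha|^2\tilde u$, this is precisely the homogenized Helmholtz equation \eqnref{eq:hom_square}. The remaining bookkeeping is to confirm that the two scales $x$ and $x/s$, together with $\delta=\mu s^2$, collect all neglected contributions into a single $\O(s)$ remainder.
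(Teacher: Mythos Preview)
The paper does not prove this theorem: it is quoted from \cite{homogenization} as the square-lattice analogue of Theorem~\ref{thm:main}, so there is no ``paper's own proof'' to compare against. That said, your proposal is exactly the right adaptation of the layer-potential argument in Sections~\ref{sec:efunc}--\ref{sec:hom} to the single-bubble square cell, and it matches the strategy of \cite{homogenization}: one-dimensional kernel, scalar capacitance $C^\alpha$, vanishing gradient and isotropic Hessian at $M$ by the fourfold symmetry, quadratic dispersion in place of \eqref{eq:dirac}, and then the rescaling $\epsilon=s$, $\delta=\mu s^2$ to obtain the Helmholtz equation \eqref{eq:hom_square} for the envelope. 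The only point where you go slightly beyond what is needed is the isotropy of the Hessian: the paper remarks that the isotropic form of \eqref{eq:hom_square} ``follows since $D$ is a circle,'' so in \cite{homogenization} the general statement allows an anisotropic second-order operator, and the scalar Laplacian appears only under the extra symmetry of a disk; your rotation argument recovers this cleanly.
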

The isotropic form of the macroscopic equation \eqnref{eq:hom_square} follows since $D$ is a circle, and an expression for $\tilde\lambda$ is given in \cite{homogenization}.

\begin{figure}[p]
	\centering
	\vspace{-40pt}
	\includegraphics[scale=0.42]{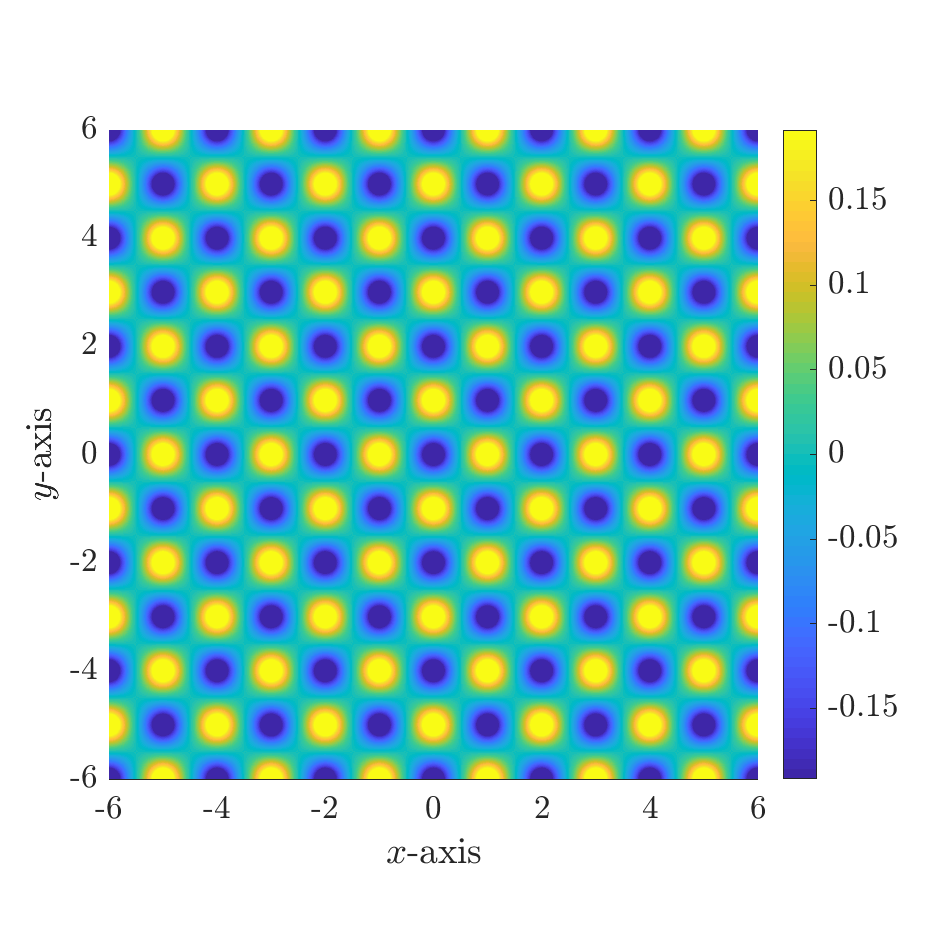}
	\vspace{-10pt}
	\label{fig:S}
	\caption{Real part of small-scale behaviour of the Bloch eigenfunction of the square lattice at $\alpha=\alpha^*$ (imaginary part close to 0).} \label{fig:small-scale_square}
\end{figure}

\begin{figure}[p]
	\centering
	\vspace{-10pt}
	\begin{subfigure}[b]{0.43\linewidth}
		\hspace{-10pt}
		\includegraphics[scale=0.37]{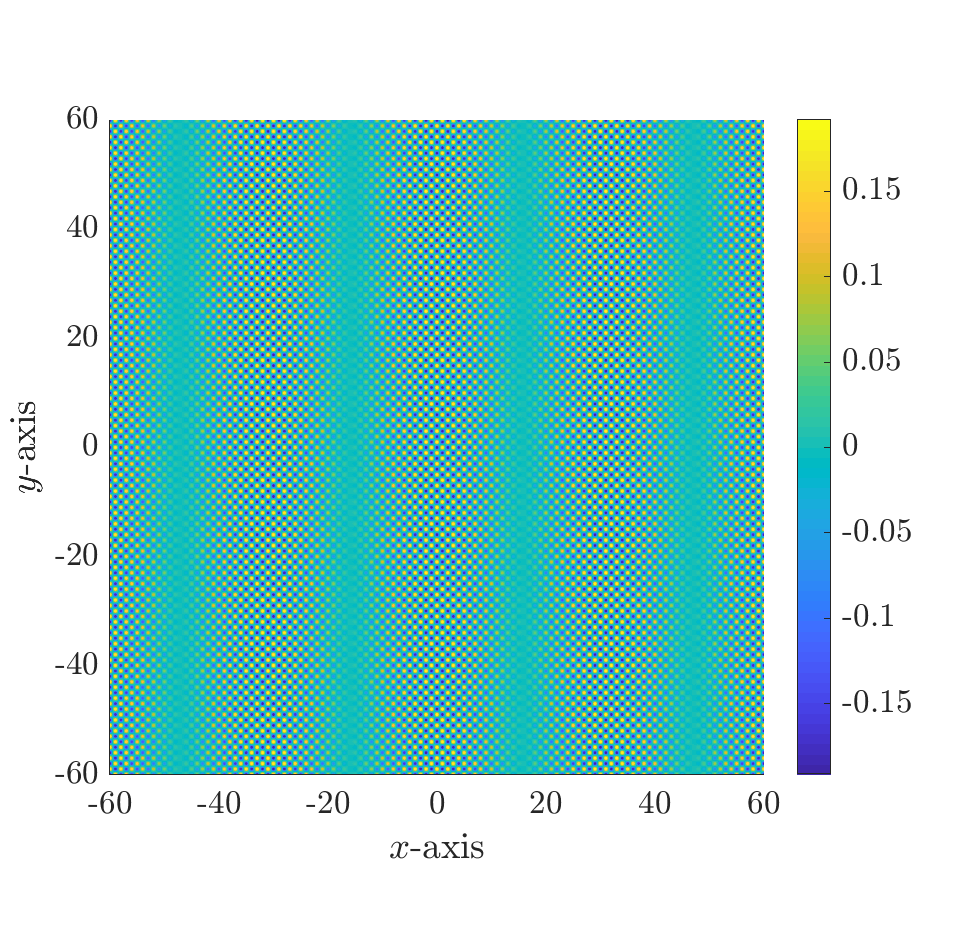}
		\vspace{-15pt}
		\caption{Two-dimensional plot.}
		\label{fig:s2D}
	\end{subfigure}
	\hspace{3pt}
	\begin{subfigure}[b]{0.52\linewidth}
		\includegraphics[scale=0.43]{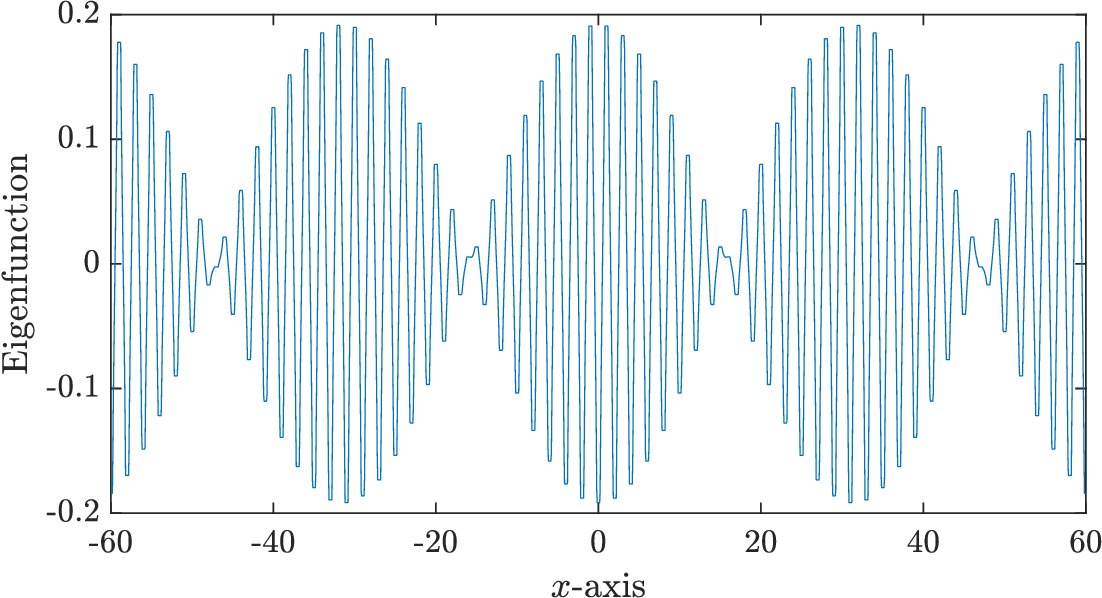}
		\caption{One-dimensional plot along the $x$-axis.}
		\label{fig:s1D}
	\end{subfigure}
	\caption{Real part of Bloch eigenfunction of the square lattice shown over many unit cells.} \label{fig:efunc_s}
\end{figure}

\begin{figure}[p]
	\centering
	\includegraphics[scale=0.45]{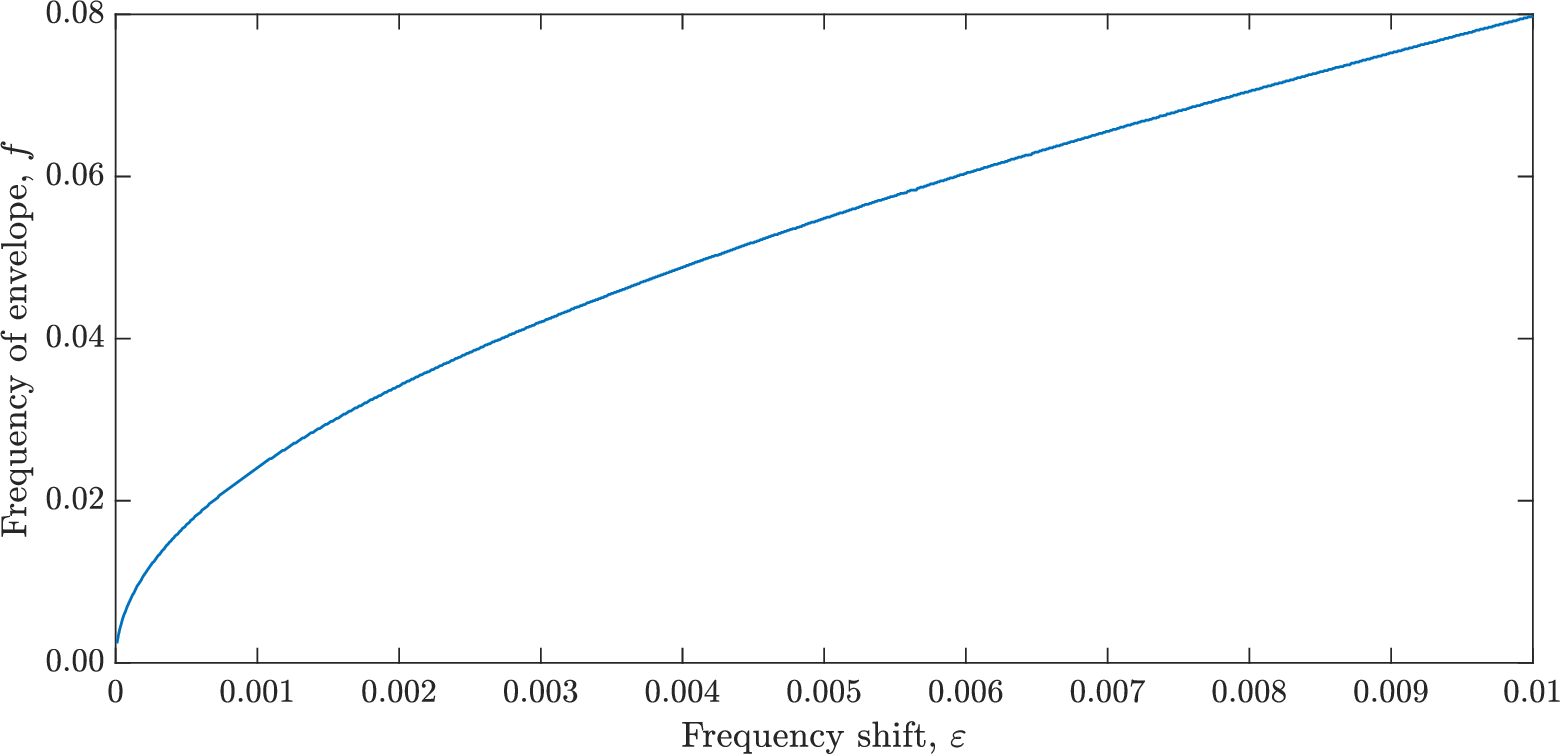}
	\caption{Spatial frequency of the envelope function $\tilde u$ for different frequency shifts $\epsilon$ in the case of a square lattice.}
	\label{fig:root}
\end{figure}

We now compute the eigenmodes of the square crystal close to the critical frequency, namely, $\omega=\omega^*-\epsilon$. The small-scale behaviour of the eigenmodes, \ie{}, the function $S\left(\frac{x}{s}\right)$, is shown in Figure \ref{fig:small-scale_square}. It can be seen that the function oscillates at the scale of the bubbles. Next, the large-scale behaviour is considered. Figure \ref{fig:efunc_s} shows the Bloch eigenfunction over many unit cells for $\tilde\alpha = \left(\begin{smallmatrix} \tilde{\alpha}_1 \\ 0 \end{smallmatrix}\right)$ and $\omega^*-\omega = 6\cdot10^{-5}$. Similarly as in the case of a honeycomb crystal, the eigenfunction varies at the large scale with a low-frequency macroscopic field $\tilde u$. 

To illustrate the macroscopic equation \eqnref{eq:hom_square}, the spatial frequency $f$ of the macroscopic field is computed for $\epsilon$ in the range $\epsilon \in [0,0.01]$. Observe that due to the bandgap above $\omega^*$, no eigenmodes exists for $\epsilon < 0$. Figure \ref{fig:root} shows the spatial frequency $f$ for different $\epsilon$. This figure shows that $f$ scales like $\sqrt{\epsilon}$ for small $\epsilon > 0$. This is consistent with  (\ref{eq:hom_square}), from which we expect a spatial frequency 
$$f = \sqrt{\frac{(\omega^*)^2-\omega^2}{\tilde\lambda^2\delta}} = \sqrt{\epsilon}\left(\sqrt{\frac{\omega^*+\omega}{\tilde\lambda^2\delta}}\right).$$

In summary, Figures \ref{fig:efunc_h1} and \ref{fig:efunc_s} show that both in the cases of a honeycomb lattice and a square lattice, the eigenmodes have a periodic small-scale oscillation and a large-scale macroscopic oscillation. However, Figures \ref{fig:linear} and \ref{fig:root} show that the spatial frequency of the macroscopic fields have different asymptotic behaviours close to the critical frequency, due to the fact that the square lattice cannot be mapped to a zero-index effective material.

\section{Concluding remarks} \label{sec:conclusion}
In this paper we have derived, for the first time, the equation governing wave propagation in a honeycomb crystal of sub-wavelength resonators near the Dirac points. We have decomposed the Bloch eigenfunctions as the sum of two eigenmodes. 
The effective equation for the envelope of each of these eigenmodes is of Helmholtz-type with near-zero refractive index. Furthermore, we have shown that the two envelopes are phase-shifted and satisfy a system of Dirac equations. A comparison with a square lattice crystal shows the great potential of using honeycomb crystals of sub-wavelength resonators as near-zero materials. In a forthcoming work, we plan to study topological phenomena in bubbly time-dependent crystals and derive their effective properties. These materials may exhibit a Dirac cone  near the point $\Gamma$ and therefore, may exhibit finite acoustic impedance and a high transmittance \cite{Dubois2017}. We also plan to mathematically analyse wave propagation phenomena in near-zero refractive index materials.

\end{document}